\theoremstyle{plain}
\newtheorem{thm}{Theorem}[section]
\newtheorem{lem}[thm]{Lemma}
\newtheorem{prop}[thm]{Proposition}
\newtheorem{cor}[thm]{Corollary}
\theoremstyle{definition}
\newtheorem{defn}[thm]{Definition}
\newtheorem{rem}[thm]{Remark}
\newcommand{\R}{\mathbb{R}}
\newcommand{\calC}{\mathcal{C}}
\newcommand{\D}{\mathcal{D}}
\newcommand{\E}{\mathcal{E}}
\newcommand{\K}{\mathcal{K}}
\newcommand{\bfe}{\boldsymbol{e}}
\newcommand{\abs}[1]{\left\lvert {#1} \right\rvert}
\newcommand{\co}{\colon\thinspace}
\newcommand{\emb}[2]{\mathrm{Emb}(\R^{#2},\R^{#1})}
\newcommand{\id}{\mathrm{id}}
\newcommand{\Map}{\mathrm{Map}}
\numberwithin{equation}{section}
\numberwithin{figure}{section}
\title{The space of short ropes and the classifying space of the space of long knots}
\author{Syunji Moriya and Keiichi Sakai}
\address{Department of Mathematics and Information Sciences, Osaka Prefecture University, Sakai, 599-8531, Japan}
\email{moriyasy@gmail.com}
\address{Faculty of Mathematics, Shinshu University, 3-1-1 Asahi, Matsumoto, Nagano 390-8621, Japan}
\email{ksakai@math.shinshu-u.ac.jp}
\date{\today}
\begin{document}
\maketitle

\begin{abstract}
We prove affirmatively the conjecture raised by J.~Mostovoy \cite{Mostovoy02}; the space of \emph{short ropes} is weakly homotopy equivalent to the classifying space of the topological monoid (or category) of long knots in $\R^3$.
We make use of techniques developed by S.~Galatius and O.~Randal-Williams \cite{GalatiusRandal-Williams10} to construct a manifold space model of the classifying space of the space of long knots, and we give an explicit map from the space of short ropes to the model in a geometric way.
\end{abstract}

\section{Introduction}\label{s:intro}
A \emph{long $j$--embedding in $\R^n$} is a smooth embedding $\R^j\hookrightarrow\R^n$ that coincides with the standard inclusion outside a compact set.
The space $\emb{n}{j}$ of all long $j$--embeddings in $\R^n$ equipped with the $C^{\infty}$--topology is widely studied in recent years,
in particular in the metastable range of dimensions.
Perhaps the space of \emph{long knots}, long $1$-embeddings in $\R^3$, is one of the most fascinating cases, but the dimension $(n,j)=(3,1)$ is not in the stable range
 and some methods for studying $\emb{n}{j}$ in high (co)dimensional cases yield only information on $K:=\pi_0(\emb{3}{1})$ when applied to $\emb{3}{1}$.
$K$ is just a free commutative monoid (and not a group) with respect to the connected-sum, and the group completion $\Omega B\emb{3}{1}$ would be strictly better from homotopy-theoretic view than $\emb{3}{1}$ itself.
In fact the group completion is a $2$-fold loop space,
since the little $2$--disks operad acts on $\emb{3}{1}$ (Budney \cite{Budney07}).
Moreover the group completion would be useful for study of (isotopy classes of) long knots since the natural map $\emb{3}{1}\to\Omega B\emb{3}{1}$ induces a monomorphism on $\pi_0$.

From this viewpoint the result of Mostovoy \cite{Mostovoy02} is very curious though it is also concerned with $K$.
A \emph{parametrized short rope} is a smooth embedding $\rho\co[0,1]\hookrightarrow\R^3$ of length $<3$ such that $\rho(i)=(i,0,0)$ for $i=0,1$.
Mostovoy has proved that the fundamental group of the space $B_2$ of parametrized short ropes is isomorphic to $\pi_1BK$, the group completion of $K$.
This leads us to the question \cite[Conjecture~1]{Mostovoy02}: is the space $B_2$ the classifying space $B\emb{3}{1}$ of $\emb{3}{1}$ ?
Our main result asserts that this is the case.

\begin{thm}[Corollary~\ref{cor:BK=R}, Theorem~\ref{thm:ropes_equiv}]\label{thm:intro_main_1}
Mostovoy's space of parametrized short ropes is weakly homotopy equivalent to the classifying space of the space of long knots.
\end{thm}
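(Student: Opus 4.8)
The plan is to prove the two weak equivalences asserted by the referenced Corollary and Theorem and then compose them: I will build an intermediate \emph{manifold model} $R$, show on one side that $B\emb{3}{1}\simeq R$ by a Galatius--Randal-Williams scanning argument, and on the other side that $B_2\simeq R$ by an explicit geometric map from short ropes; transitivity then gives $B_2\simeq B\emb{3}{1}$.

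For the first equivalence I would encode long knots as morphisms in a $1$--dimensional cobordism category. Writing $\R^3=\R^2\times\R$ and treating the last coordinate as time, the objects are finite configurations of points in $\R^2$, a morphism is a properly embedded $1$--manifold (a tangle) in $\R^2\times[0,1]$ restricting to the given configurations at the two ends, and composition is stacking in the time direction. A long knot is then exactly a single-strand endomorphism of the standard object $\{\zero\}\subset\R^2$, and connected-sum is composition, so $\emb{3}{1}$ is recovered as this endomorphism monoid. I would let $R$ be the associated space of proper single-strand $1$--submanifolds of $\R^2\times\R$ that are standard near the two ends, and run the scanning/group-completion argument of Galatius--Randal-Williams on the one-object subcategory of long knots to identify $B\emb{3}{1}$ with $R$. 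The one non-formal point here is that the scanning theorem must be adapted to this single-strand, low-codimension situation rather than quoted verbatim; the free commutative monoid structure of $K=\pi_0\emb{3}{1}$ ensures that the bar construction group-completes, and the answer is consistent with Mostovoy's computation of $\pi_1 B_2$.

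For the second equivalence I would construct the scanning map $B_2\to R$ directly. Given a parametrized short rope $\rho$, I record its slices by the planes $x_1=t$: at the generic $t$ these meet $\rho$ transversally, and the resulting $1$--parameter family of point configurations in $\R^2$ assembles into a proper $1$--submanifold, hence a point of $R$. Equivalently, the places where $\rho$ crosses such a plane in a single standard point cut the rope into a finite sequence of long knots, and this sequence is precisely a point of the bar construction modelling $R$. The hypothesis that $\rho$ has length $<3$ is what makes the construction land in the \emph{bounded} model and is the geometric source of the delooping: the bound forces only finitely many, boundedly complex knots to appear, keeps every slice inside a fixed region, and provides exactly the room needed both to realise connected-sums and to cancel a knot by shrinking it into an endpoint -- the move that produces inverses and hence the group completion that distinguishes $B_2$ from the knot space itself.

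The main obstacle is proving that this scanning map is a weak homotopy equivalence. I expect injectivity and surjectivity on homotopy groups to require a parametrized isotopy argument that controls arc length throughout: surjectivity means realising an arbitrary family of single-strand submanifolds in $R$ by a family of ropes of length $<3$, and injectivity means connecting two ropes that scan to homotopic points through a length-constrained isotopy. The delicate part is the precise role of the constant: one must verify that the finiteness of the length bound is what deloops (an unbounded bound would merely reproduce $\emb{3}{1}$) while the specific value $3$ still leaves enough room to perform every compression and connected-sum move needed in families. I anticipate this resting on an explicit, continuously varying surgery procedure that trades slack length for the elementary moves of the cobordism category; once it is in place, composing the resulting equivalence $B_2\simeq R$ with $B\emb{3}{1}\simeq R$ from the first step yields the theorem.
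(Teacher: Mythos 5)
Your overall strategy---a Galatius--Randal-Williams style manifold model interpolating between $B\emb{3}{1}$ and the space of short ropes, together with an explicit geometric comparison map---is the same as the paper's, but there is a genuine error in the definition of your intermediate model $R$, and it sits exactly where the delooping is supposed to happen. You take $R$ to be the space of \emph{single-strand} proper $1$--submanifolds of $\R^2\times\R$ that are \emph{standard near the two ends}. With both conditions imposed, every element of $R$ is literally a long knot up to a contractible choice of where it becomes standard, so $R$ is weakly equivalent to $\emb{3}{1}$ itself, not to its classifying space, and no scanning argument can rescue the identification $B\emb{3}{1}\simeq R$. The point of the Galatius--Randal-Williams construction is precisely to \emph{drop} all conditions at infinity: the correct model (the space $\psi$ of Definition~\ref{def:manifold_space}) consists of $1$--submanifolds whose components are closed and non-compact and which admit at least one transverse one-point slice, with the topology in which pieces of the manifold may escape to $\pm\infty$ and ``vanish''. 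Its elements generically have several components (the extra ones long on exactly one side) and are not standard near the ends; the classifying-space behaviour comes from this relaxation at infinity, not from the monoid structure of $\pi_0\emb{3}{1}$ or from a group-completion argument.

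Beyond that, the two equivalences are only announced. For $B_2\simeq R$ the essential input you do not supply is that $l(\rho)<3$ \emph{forces} the existence of a transverse one-point slice---without this a short rope cannot be cut into composable long knots at all, and your phrase ``the places where $\rho$ crosses such a plane in a single standard point'' presupposes it---and, conversely, that any reducible rope can be made short by first unknotting its two end segments (this uses Mostovoy's contractibility lemma for ropes standard near an endpoint) and then rescaling by the reciprocal of its length. The role of the constant $3$ is exactly this reducibility statement, not a bound on the ``complexity'' of the knots appearing or a mechanism for cancelling knots into endpoints. On the manifold side, the paper proves $B\D\to\psi$ is a weak equivalence by the partition-of-unity lifting argument and compares the nerves of the long-knot category and of an auxiliary poset of (manifold, slice) pairs via Segal's levelwise criterion; the ``continuously varying surgery procedure'' you anticipate would have to reproduce all of this, so as written the central steps remain unproved.
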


One of the main ingredients in the proof of Theorem~\ref{thm:intro_main_1} is the technique of Galatius and Randal-Williams \cite{GalatiusRandal-Williams10}.
It enables us to construct a model of $B\emb{3}{1}$．
The model is a space of certain $1$--dimensional submanifolds in $\R^3$ whose connected components are non-compact closed subspaces of $\R^3$ (see Definition~\ref{def:manifold_space}).
We prove Theorem~\ref{thm:intro_main_1} by introducing the notion of \emph{reducible ropes} (see Definition~\ref{def:short_rope}) and by comparing the manifold space model with the space of short ropes through reducible ropes:

\begin{thm}[Corollary~\ref{cor:BK=R}, Theorem~\ref{thm:ropes_equiv}]\label{thm:intro_main_2}
The manifold space model and Mostovoy's space of parametrized short ropes are both weakly homotopy equivalent to the space of reducible ropes.
\end{thm}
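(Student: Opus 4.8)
The plan is to prove the two asserted weak equivalences separately, using the space $\calR$ of reducible ropes as the common hub. I would construct one map $\calR\to\mathcal{M}$ into the manifold space model (Corollary~\ref{cor:BK=R}) and one map relating $\calR$ to Mostovoy's space $B_2$ of parametrized short ropes (Theorem~\ref{thm:ropes_equiv}), and show that each induces isomorphisms on all homotopy groups. Since reducible ropes are introduced alongside short ropes (Definition~\ref{def:short_rope}), I expect a reducible rope to be a short rope equipped with a ``reduction'' datum which can on the one hand be forgotten to recover the underlying short embedding, and on the other hand be used to exhibit a non-compact closed $1$--submanifold of $\R^3$ (Definition~\ref{def:manifold_space}). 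Both comparison maps are then geometrically transparent, and the work lies entirely in the homotopy-theoretic verifications.

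For the comparison $\calR\simeq B_2$, I would analyze the map that remembers only the underlying parametrized embedding of a reducible rope. The strategy is to show this map is a quasifibration whose fibre over a short rope is the space of reductions of that rope, and that the reducibility condition is arranged so that this fibre is, whenever non-empty, contractible --- for instance a star-shaped space of admissible scalings or retracting isotopies that can be linearly averaged --- while every short rope admits a reduction up to homotopy. Contractibility of the fibres together with the quasifibration property then yields the weak equivalence from the long exact sequence, or via Dold's theorem on fibrewise weak equivalences. If instead $\calR$ is realized as the subspace of $B_2$ cut out by the reducibility property, the same input should produce an explicit deformation retraction of $B_2$ onto $\calR$.

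For the comparison $\calR\simeq\mathcal{M}$ I would invoke the technology of Galatius and Randal-Williams. A reducible rope determines a point of $\mathcal{M}$ by passing to its reduction, which unwinds the bounded part of the rope while recording the knotting as a non-compact strand in $\R^3$; this defines the map $\calR\to\mathcal{M}$. To see it is a weak equivalence I would present both sides as the representing space of a suitable sheaf of embedded $1$--manifolds and reduce the statement to a local, contractibility-type assertion, verified through the parametrized isotopy extension theorem and the microflexibility of the embedding condition. The fact that this models the \emph{group completion} $\Omega B\emb{3}{1}$ rather than $\emb{3}{1}$ itself is built into allowing several strands and into the cobordism-category (stacking / connected-sum) structure on $\mathcal{M}$, matching the free commutative monoid $K$ on $\pi_0$.

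The main obstacle is this second equivalence. The Galatius--Randal-Williams formalism was designed in the stable range, whereas $(n,j)=(3,1)$ is firmly unstable and of codimension $2$; the delicate step is to check that the sheaf of embedded $1$--manifolds in $\R^3$ still enjoys the local-to-global (descent) property and that scanning induces the asserted equivalence in this low-dimensional regime. I expect most of the effort to go into verifying microflexibility and the homotopy-colimit decomposition here, and into matching the monoid structure so that the group completion appears on the nose. By contrast, once the reducibility condition is correctly formulated, the $\calR\simeq B_2$ comparison should be a comparatively soft fibration-with-contractible-fibre argument.
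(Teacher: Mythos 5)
Your proposal rests on a misreading of what a reducible rope is, and this propagates into both halves of the argument. In Definition~\ref{def:short_rope} the space $R$ consists of \emph{unparametrized} ropes in $\R^1\times D^2$ of \emph{arbitrary length} that happen to meet some slice $\{t\}\times D^2$ transversely in a single point; a reducible rope is not a short rope equipped with a remembered ``reduction datum,'' and it need not be short at all. Consequently the comparison $R\simeq B_2$ is not a fibration-with-contractible-fibres over $B_2$: the actual content is (i) recentering the endpoints and quotienting by the contractible group $\mathrm{Diff}^+([0,1])$ (Lemmas~\ref{lem:B->B^2}, \ref{lem:B^u->R^2}), which is soft as you predict, and (ii) showing that the length constraint $l(r)<3$ can be removed, i.e.\ that arbitrarily long reducible ropes can be coherently shrunk to short ones. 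Step (ii) is the genuinely delicate point and your proposal does not engage with it; the paper handles it not on the rope spaces themselves but on the classifying spaces of the posets $\E^{\rm s}(\tau)\subset\E(\tau)$ of ropes marked with a reducibility time, because one must first unknot the ends (via Mostovoy's contractibility lemma, Lemma~\ref{lem:W_L_contractible}) and then rescale while preserving the marked transverse slices. Without tracking the reducibility times through the shrinking there is no evident way to make your ``admissible scalings'' argument work.

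For the comparison with the manifold space model $\psi$, your plan invokes scanning, sheaf descent and microflexibility, and you flag the codimension-two instability as the main obstacle. This is the wrong machinery and the wrong worry: the paper never uses the h-principle or stable-range parts of Galatius--Randal-Williams, only the elementary cobordism-category comparison. Concretely, one forms posets $\D\supset\D^{\perp}$ of pairs $(T,M)$ with $M\in\psi$ (strongly) reducible at $T$, and $\E\supset\E^{\perp}$ likewise for ropes; the forgetful maps $B\D\to\psi$ and $B\E\to R$ are weak equivalences by a partition-of-unity lifting argument (Theorems~\ref{thm:BK=psi_s}, \ref{thm:BE=R}), and $B\E^{\perp}\to B\D^{\perp}$ is a levelwise equivalence of nerves realized by the explicit cut-off map $c(r)=(f\times\id_{D^2})(r|_{(0,1)})$ with $f(t)=\tan\pi(t-\tfrac12)$ and its long-extension inverse (Proposition~\ref{prop:zig_zag2}). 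None of this requires any local-to-global property of embedded $1$--manifolds in $\R^3$, so the instability of $(n,j)=(3,1)$ is not an obstruction here. Your proposal is missing the poset/nerve interface entirely, and that interface is what makes both equivalences provable.
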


It is very interesting that we can realize the weak equivalence from the manifold space model to the space of reducible ropes as a ``cut-off map'' which is explicit and geometric.
Therefore Mostovoy's space of short ropes and the space of reducible ropes would serve as tools to study $B\emb{3}{1}$ in a geometric way.

\section{Manifold space model of the classifying space of the space of long knots}\label{s:long_knot_space}
\subsection{Notations}
Throughout this paper $D^m$ and $\overline{D}{}^m$ stand respectively for the open and closed unit $m$-disks;
\[
 D^m:=\{p\in\R^m\mid\abs{p}<1\},\quad
 \overline{D}{}^m:=\{p\in\R^m\mid\abs{p}\le 1\}.
\]

For a $1$--dimensional manifold $M\subset\R^1\times D^2$ and a subset $A\subset\R^1$,
let
\[
 M|_A:=M\cap(A\times D^2).
\]
For a one point set $A=\{T\}$,
we simply write $M|_T$ for $M|_{\{T\}}$.

\begin{defn}\label{def:transverse_cylindrical}
A $1$--dimensional manifold $M\subset\R^1\times D^2$ is said to be
\begin{itemize}
\item
	\emph{reducible} \emph{at} $T\in\R^1$ if $M$ intersects $\{T\}\times D^2$ transversely in one point set.
\item
	\emph{strongly reducible} \emph{at} $T\in\R^1$ if $M|_T$ is one point set and there exists an $\epsilon>0$ satisfying
	\[
	 M|_{(T-\epsilon, T+\epsilon)}=(T-\epsilon,T+\epsilon)\times\{p_{23}(M|_T)\},
	\]
	where $p_{23}\co\R^1\times D^2\to D^2$ is the projection.
\end{itemize}
\end{defn}

\begin{rem}
The word ``reducible'' indicates that the manifold looks like a ``connected sum'' of two $1$--manifolds.
But the meaning is different from that in knot theory, in that a reducible manifold does not need to split into a connected sum of nontrivial knots.
\end{rem}

\subsection{The category $\K$ of long knots}
First we define the space $\psi$ that we have referred to in Section \ref{s:intro} as the \emph{manifold space model}.
\begin{defn}\label{def:manifold_space}
Let $\psi$ be the set of $1$--dimensional submanifolds $M\subset\R^1\times D^2$ such that
\begin{itemize}
\item
	$\partial M=\emptyset$,	
\item
	each connected component of $M$ is a closed, non-compact subspace in $\R^3$, and
\item
	there exists at least one $T\in\R$ such that $M$ is reducible at $T$
\end{itemize}
(see Figure~\ref{fig:psi}).
\begin{figure}
\begin{center}
\input{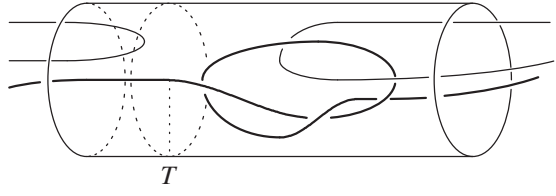}
\end{center}\caption{An element of $\psi$; the long component is drawn with a thick curve}
\label{fig:psi}
\end{figure}
The above conditions imply that $M\in\psi$ contains exactly one connected component $M_0$ satisfying $M_0|_t\ne\emptyset$ for any $t\in\R^1$.
Such a component is said to be \emph{long}.
It can also be seen that the other connected components (if they exist) are \emph{long on exactly one side};
we say a component $M_1$ is \emph{long on the left} (resp.~\emph{right})
if there exists $T\in\R^1$ such that $M_1|_s\ne\emptyset$ for any $s\le T$ (resp.~$s\ge T$)
but $M_1|_{(T,\infty)}=\emptyset$ (resp.~$M_1|_{(-\infty,T)}=\emptyset$).
The set $\psi$ is topologized as a subspace of $\psi(3,1)$ from Galatius and Randal-Williams \cite[Section 3.1]{GalatiusRandal-Williams10} (without any ``tangential data'').
\end{defn}

\begin{rem}\label{rem:K-topology}
Roughly speaking, two manifolds $M,N\in\psi$ are ``close to each other if they are close in a compact subspace of $\R^3$''.
A bit more precisely, for $M\in\psi$, the set of manifolds whose intersections with some compact subspace of $\R^3$ are obtained by shifting $M$ along small normal sections to $M$, is a basic open neighborhood of $M$ in $\psi$.
It is worth mentioning the following example:
Let $\alpha\co[0,1)\to\R_{\ge 0}$ be a monotonically increasing function with $\alpha(0)=0$ and $\lim_{t\to 1}\alpha(t)=\infty$, and $M(t)\in\psi$ ($0\le t<1$) a continuous family satisfying $M(t)|_{[-\alpha(t),\,\alpha(t)]}=[-\alpha(t),\,\alpha(t)]\times \{(0,0)\}$.
Then $M(t)$ converges to the trivial long knot $\R^1\times\{(0,0)\}$ in this topology as $t$ tends to $1$ (see also \cite[Example~2.2]{GalatiusRandal-Williams10}).
\end{rem}

\begin{rem}\label{rem:separated}
For any $M\in\psi$ there exists $T\in\R^1$ such that all the components of $M$ that are long on the left (resp.\ right) are contained in $(-\infty,T)\times D^2$ (resp.~ $(T,\infty)\times D^2$).
\end{rem}

\begin{defn}\label{def:cat_long_knot}
We define the \emph{category} $\K$ \emph{of long knots} as follows.
The space of objects of $\K$ is $D^2$ with the usual topology.
A non-identity morphism from $p$ to $q$ is a pair $(T,M)$,
where $T>0$ and $M\in\psi$ is a \emph{long knot from $p$ to} $q$, namely a connected $1$--manifold (and hence long) that is strongly reducible at any $t\in(-\infty,\epsilon)\cup(T-\epsilon,\infty)$ for some $\epsilon>0$;
\[
 M|_{(-\infty,\epsilon)}=(-\infty,\epsilon)\times\{ p\},\quad
 M|_{(T-\epsilon,\infty)}=(T-\epsilon,\infty)\times\{ q\}.
\]
The identity morphism $\id\co p\to p$ is given by $(0,\R^1\times\{p\})$.
The total space $\bigcup_{p,q}\Map_{\K}(p,q)$ of all morphisms is topologized as a subspace of $(\{0\}\sqcup\R^1_{>0})\times\psi$,
where $\{0\}\sqcup\R^1_{>0}$ is a disjoint union.
The composition $\circ\co\Map_{\K}(q,r)\times\Map_{\K}(p,q)\to\Map_{\K}(p,r)$ is defined by
\[
 (T_1,M_1)\circ(T_0,M_0):=(T_0+T_1,M_0|_{(-\infty,T_0]}\cup(M_1|_{[0,\infty)}+T_0\bfe_1)),
\]
where $\bfe_1=(1,0,0)\in\R^3$ and $+T\bfe_1$ stands for the translation by $T$ in the direction of $\R^1$.
\end{defn}

In this section we show that $B\K$ (see Section \ref{ss:BC} for the definition) is weakly equivalent to $\psi$.
The following posets play roles as interfaces between them.

\begin{defn}
Define a poset $\D$ by
\[
 \D:=\{(T,M)\in\R^1\times\psi\mid
 M\text{ is reducible at }T\}
\]
and topologize $\D$ as a subspace of $\R^1\times\psi$.
Define the partial order $\le$ on $\D$ so that $(T,M)<(T',M')$ if and only if $M=M'$ and $T<T'$.
We regard $\D$ as a small category in the usual way,
namely $\Map_{\D}(x,y)$ is a one point set $\{(x,y)\}$ if $x\le y$, and $\emptyset$ otherwise.
The total space of all morphisms is topologized as a subspace of $\bigl(\Delta\sqcup(\R^1\times\R^1\setminus\Delta)\bigr)\times\psi$,
where $\Delta:=\{(x,x)\in\R^1\times\R^1\}$ is the diagonal set.

Define $\D^{\perp}$ as a subposet of $\D$ consisting of $(T,M)$ with $M$ strongly reducible at $T$.
\end{defn}

\subsection{Classifying spaces of categories}\label{ss:BC}
Here we recall the general definition of classifying spaces of topological categories.

For a topological category $\calC$, its \emph{nerve} is the simplicial space whose level $l$ space $N_l\calC$ consists of sequences of composable $l$ morphisms $(x_0\xrightarrow{f_1}x_1\xrightarrow{f_2}\dotsb\xrightarrow{f_l}x_l)$ in $\calC$ and is topologized as a subspace of the $l$-th power of the total space of all morphisms in $\calC$.
By definition $N_0\calC$ is the space of objects in $\calC$.
The face maps are given by the compositions, and the degeneracy maps are given by inserting the identity morphisms.
The \emph{classifying space} $B\calC$ of $\calC$ is defined as the geometric realization of $N_*\calC$;
\[
 B\calC:=\abs{N_*\calC}:=\Bigl(\bigsqcup_{l\ge 0}(N_l\calC\times\Delta^l)\Bigr)/\mathord{\sim},
\]
where $\Delta^l:=\{(\lambda_0,\dotsc,\lambda_l)\in[0,1]^{l+1}\mid\sum_i\lambda_i=1\}$ is the standard $l$-simplex.
The relation $\sim$ is defined so that, for any order preserving map $\sigma\co\{0,\dotsc,l\pm 1\}\to\{0,\dotsc,l\}$,
\begin{equation}\label{eq:relation}
 N_{l\pm 1}\calC\times\Delta^{l\pm 1}\ni(\sigma^*f,\lambda) \sim (f,\sigma_*\lambda)\in N_l\calC\times\Delta^l
\end{equation}
where $\sigma_*$ and $\sigma^*$ are the induced maps on (co)simplicial spaces.

Recall from Segal \cite{Segal74} a sufficient condition for a simplicial map to induce a homotopy equivalence on geometric realizations.

\begin{defn}[{\cite[Definition~A.4]{Segal74}}]
We say a simplicial space $A_*$ is \emph{good} if $s_iA_l\hookrightarrow A_{l+1}$ is a closed cofibration for each $l$ and $0\le i\le l$, where $s_i$ stands for the $i$-th degeneracy map.
\end{defn}

\begin{lem}[{\cite[Proposition~A.1]{Segal74}}]\label{lem:levelwise_equivalence}
Let $A_*$ and $B_*$ be good simplicial spaces.
Suppose there exists a simplicial map $f_*\co A_*\to B_*$ which is a levelwise homotopy equivalence, that is $f_l\co A_l\to B_l$ is a homotopy equivalence for each $l$.
Then $f$ induces a homotopy equivalence $\abs{f_*}\co\abs{A_*}\xrightarrow{\simeq}\abs{B_*}$ on the geometric realizations.
\end{lem}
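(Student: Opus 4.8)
The plan is to prove the statement by induction over the skeletal filtration of the geometric realizations, reducing the inductive step to the gluing lemma for homotopy pushouts along closed cofibrations. Recall that $\abs{A_*}$ is filtered by its skeleta $F_0\abs{A_*}\subseteq F_1\abs{A_*}\subseteq\dotsb$, where $F_n\abs{A_*}$ is the image of $\bigsqcup_{l\le n}A_l\times\Delta^l$, and that $F_n\abs{A_*}$ is obtained from $F_{n-1}\abs{A_*}$ as the pushout of
\[
 F_{n-1}\abs{A_*}\longleftarrow P_nA\hookrightarrow A_n\times\Delta^n,
\]
where $P_nA:=(A_n\times\partial\Delta^n)\cup_{L_nA\times\partial\Delta^n}(L_nA\times\Delta^n)$ and $L_nA:=\bigcup_{i=0}^{n-1}s_i(A_{n-1})\subseteq A_n$ is the degenerate part (latching object) at level $n$. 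First I would record that the goodness hypothesis makes $L_nA\hookrightarrow A_n$ a closed cofibration; together with the cofibration $\partial\Delta^n\hookrightarrow\Delta^n$ this makes $P_nA\hookrightarrow A_n\times\Delta^n$ a closed cofibration, so each skeletal inclusion $F_{n-1}\abs{A_*}\hookrightarrow F_n\abs{A_*}$ is a closed cofibration, and likewise for $B_*$.

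It then suffices to show that $\abs{f_*}$ restricts to a homotopy equivalence $F_n\abs{A_*}\to F_n\abs{B_*}$ for every $n$: since both filtrations consist of closed cofibrations and $\abs{A_*}=\mathrm{colim}_n F_n\abs{A_*}$, a compatible family of equivalences on the filtration stages passes to an equivalence on the colimit. The base case $n=0$ is the hypothesis that $f_0\co A_0\to B_0$ is a homotopy equivalence. For the inductive step I would compare the two pushout presentations above via the map of diagrams induced by $f$, whose three corner maps are $f_n\times\id$ on $A_n\times\Delta^n$, the inductively known equivalence $F_{n-1}\abs{A_*}\to F_{n-1}\abs{B_*}$, and the map $P_nA\to P_nB$. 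The outer legs $P_nA\hookrightarrow A_n\times\Delta^n$ and $P_nB\hookrightarrow B_n\times\Delta^n$ are closed cofibrations, so once the three corner maps are shown to be homotopy equivalences the gluing lemma yields the equivalence $F_n\abs{A_*}\to F_n\abs{B_*}$.

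This reduces everything to proving that $f$ induces a homotopy equivalence $L_nA\to L_nB$ on latching objects (whence, by a further application of the gluing lemma to the pushout of $A_n\times\partial\Delta^n\longleftarrow L_nA\times\partial\Delta^n\hookrightarrow L_nA\times\Delta^n$ defining $P_nA$, the map $P_nA\to P_nB$ is also an equivalence). This latching-object comparison is the step I expect to be the main obstacle, and it is the only place where goodness is essential. The idea is that $L_nA$ is assembled from the subspaces $s_i(A_{n-1})$, whose iterated intersections are again degenerate images coming from the lower levels $A_{n-1},A_{n-2},\dotsc$ through the simplicial identities; goodness guarantees that all of these inclusions are closed cofibrations, so that $L_nA$ is exhibited as an iterated pushout (a homotopy colimit) of copies of the $A_k$ with $k<n$ along cofibrations. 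Since $f$ restricts on each such stratum to $f_k$, which is a homotopy equivalence, a secondary induction on the number of degeneracies, using the union theorem for closed cofibrations and the gluing lemma at each amalgamation, shows that $L_nA\to L_nB$ is a homotopy equivalence. Feeding this back completes the induction, and hence the proof.
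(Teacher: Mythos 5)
Your argument is correct, but it is worth noting that the paper does not prove this lemma at all: it is quoted verbatim from Segal's Proposition~A.1, and Segal's own route is different from yours. Segal factors the problem through the fat realization $\lVert A_*\rVert$ (built from $\bigsqcup_l A_l\times\Delta^l$ using only the face identifications): a levelwise homotopy equivalence always induces an equivalence $\lVert A_*\rVert\to\lVert B_*\rVert$, because there the $n$--skeleton is obtained from the $(n-1)$--skeleton by a single pushout along $A_n\times\partial\Delta^n\hookrightarrow A_n\times\Delta^n$ and no latching objects appear; goodness is then invoked exactly once, to show that the collapse map $\lVert A_*\rVert\to\abs{A_*}$ is a homotopy equivalence. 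Your proof works directly with $\abs{A_*}$ and pays for that by having to control the degenerate part $L_nA$, which you correctly identify as the crux. That step does go through --- each $s_i$ is a closed embedding split by a face map, the iterated intersections $s_{i_1}(A_{n-1})\cap\dotsb\cap s_{i_k}(A_{n-1})$ are degenerate images from lower levels by the simplicial identities, and Lillig's union theorem for closed cofibrations together with the gluing lemma then handles $L_nA\to L_nB$ --- but it is the most delicate part of your write-up and precisely the part Segal's detour is designed to avoid. Both arguments are standard and both use goodness essentially; yours is self-contained, while Segal's is more modular and yields the comparison $\lVert A_*\rVert\simeq\abs{A_*}$ as a by-product.
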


\subsection{The classifying space of $\K$}
Notice that any element of $N_l\D$ (resp.\ $N_l\D^{\perp}$), $l\ge 0$, can be expressed as a pair $(T_0\le\dotsb\le T_l;M)$,
where $M\in\psi$ is reducible (resp.\ strongly reducible) at each $T_i$.
Similarly any element of $N_l\K$ ($l\ge 1$) is of the form $(0\le T_1\le\dotsb\le T_l;M)$, where $M$ is a long knot that is strongly reducible at each $T_i$.

\begin{lem}\label{lem:good}
The simplicial spaces $N_*\K$, $N_*\D$ and $N_*\D^{\perp}$ are good.
\end{lem}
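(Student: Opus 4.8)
The plan is to recall the standard criterion (Segal's Proposition~A.4 in \cite{Segal74}) that a simplicial space $A_*$ is good provided that each degeneracy inclusion $s_iA_l\hookrightarrow A_{l+1}$ is a closed cofibration, and to verify this by exhibiting, for each of the three simplicial spaces, a neighborhood of the degenerate subspace together with a deformation retraction onto it (an NDR pair). Since the degeneracy map $s_i$ inserts a repeated threshold $T_i=T_{i+1}$, the degenerate subspace $s_iN_l$ inside $N_{l+1}$ is precisely the locus where two consecutive thresholds coincide, with $M$ held fixed. The manifold $M$ plays no role in the cofibration condition: all the motion happens in the simplicial (threshold) coordinates $T_0\le\dotsb\le T_{l+1}$, and $M$ is simply carried along. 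So I would fix $M$ and reduce the question to a cofibration statement about the poset of thresholds.

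First I would describe $N_{l+1}$ of each category concretely, using the expression noted just before the lemma: an element is a pair $(T_0\le\dotsb\le T_{l+1};M)$ with $M$ reducible (resp.\ strongly reducible, resp.\ a long knot strongly reducible) at each $T_j$, and for $N_*\K$ one has the extra normalization $T_0=0$. The image $s_iN_l$ is the closed subset defined by $T_i=T_{i+1}$. Next I would construct an explicit NDR structure: take the Urysohn-type function $u(T_0,\dotsc,T_{l+1}):=\min\{1,(T_{i+1}-T_i)/\delta\}$ for a fixed small $\delta$, which vanishes exactly on the degenerate locus, and define a fiberwise homotopy that linearly collapses the gap $T_{i+1}-T_i$ to zero when it is already small, pushing a neighborhood of $s_iN_l$ onto $s_iN_l$ while keeping $M$ and the remaining thresholds fixed. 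The key point to check is that this homotopy stays inside the relevant simplicial space, i.e.\ that after shrinking the gap the manifold $M$ remains reducible (resp.\ strongly reducible) at the moved thresholds. For $\D$ and $\K$ this is automatic, since $M$ is unchanged and reducibility/strong reducibility at each surviving threshold is untouched; collapsing $T_{i+1}$ onto $T_i$ merely identifies two thresholds at which $M$ is already (strongly) reducible.

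I expect the only genuine subtlety to be topological rather than combinatorial: one must confirm that the above function $u$ and the retracting homotopy are continuous in the subspace topology inherited from $\bigl(\Delta\sqcup(\R^1\times\R^1\setminus\Delta)\bigr)\times\psi$ (and its higher analogues), where the total morphism spaces are topologized so as to separate the diagonal from its complement. Because the degeneracy locus $T_i=T_{i+1}$ sits in the diagonal factor, the splitting of the topology is in fact helpful here: it guarantees that $s_iN_l$ is closed and that the neighborhood $u^{-1}([0,1))$ is genuinely open, so the NDR data patch together continuously. Once continuity is secured, the homotopy and the function $u$ exhibit $(N_{l+1},s_iN_l)$ as an NDR pair, hence $s_iN_l\hookrightarrow N_{l+1}$ as a closed cofibration, for every $l$ and every $0\le i\le l$; applying this to $N_*\K$, $N_*\D$, and $N_*\D^{\perp}$ simultaneously gives the goodness of all three and completes the proof.
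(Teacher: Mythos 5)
Your overall strategy (verify that each degeneracy inclusion $s_iN_l\hookrightarrow N_{l+1}$ is a closed cofibration) is the right one, but the NDR pair you construct does not work, for two reasons, and in fact the special topology on the morphism spaces makes any such construction unnecessary. First, your retraction moves the threshold $T_{i+1}$ continuously down to $T_i$; at intermediate times of the homotopy the moved threshold sits at a value strictly between $T_i$ and $T_{i+1}$, and there is no reason for $M$ to be reducible (let alone strongly reducible) there --- reducibility at the two endpoints says nothing about the points in between (the manifold may meet $\{t\}\times D^2$ in three points for such $t$). So your claim that staying inside the simplicial space is ``automatic'' is false, and the homotopy leaves the nerve. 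Second, and more decisively, in the topology the paper actually uses, the total morphism space of $\D$ is a subspace of $\bigl(\Delta\sqcup(\R^1\times\R^1\setminus\Delta)\bigr)\times\psi$ where the diagonal and its complement form a \emph{disjoint union}; a path starting at a point with $T_{i+1}>T_i$ and ending at a point with $T_{i+1}=T_i$ would have to cross between two disjoint clopen pieces and cannot be continuous. You noticed this splitting but read it as ``helpful'' for patching your NDR data, when in fact it rules your homotopy out.

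That same observation is exactly what makes the paper's proof a one-liner: because the identity morphisms form a separate clopen summand of the morphism space, $s_iN_l$ is a union of connected components of $N_{l+1}$ (simultaneously open and closed), and the inclusion of a clopen subspace is trivially a closed cofibration (take the Urysohn function to be the indicator of the complement and the homotopy to be constant). This is also the point of the Remark following Theorem~\ref{thm:BK=psi_s}, which says the chosen topology ``makes the proof of goodness of the nerves easier.'' Had the alternative topology mentioned there been used (morphisms topologized as a subspace of $\R\times\R\times\psi$), a genuine NDR construction along your lines would be required, and it would still need to be repaired at the first point above, e.g.\ by a homotopy that does not move the thresholds through non-reducible values.
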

\begin{proof}
For $0\le i\le l$,
$s_iN_l\K=\{(0\le T_1\le \dotsb\le T_{l+1};M)\mid T_i=T_{i+1}\}\subset N_{l+1}\K$ (here $T_0:=0$)
is a union of connected components of sequences involving identity morphisms,
and hence the pair $(N_{l+1}\K,s_iN_l\K)$ has the homotopy extension property.
The proofs for $N_*\D$ and $N_*\D^{\perp}$ are the same.
\end{proof}

\begin{prop}\label{prop:zig-zag}
There exists a zig-zag of levelwise homotopy equivalences
$N_*\K\leftarrow N_*\D^{\perp}\to N_*\D$.
Consequently $B\K\leftarrow B\D^{\perp}\to B\D$ are all homotopy equivalences.
\end{prop}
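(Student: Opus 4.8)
The plan is to construct two simplicial maps
\[
 N_*\K \xleftarrow{\Phi_*} N_*\D^{\perp} \xrightarrow{j_*} N_*\D
\]
and to prove that each is a levelwise homotopy equivalence; the final assertion about the classifying spaces then follows from Lemma~\ref{lem:good} together with Lemma~\ref{lem:levelwise_equivalence}. Here $j_*$ is the inclusion induced by $\D^{\perp}\subset\D$. The map $\Phi_*$ is a \emph{cut-off map}: given $(T_0\le\dotsb\le T_l;M)\in N_l\D^{\perp}$, reducibility at each $T_i$ forces all the $T_i$ to lie on the unique long component $M_0$ and confines every other component to $(-\infty,T_0)\times D^2$ or to $(T_l,\infty)\times D^2$ (Remark~\ref{rem:separated}). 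I would set $\Phi_l(T_0\le\dotsb\le T_l;M):=(0\le T_1-T_0\le\dotsb\le T_l-T_0;\widehat{M})$, where $\widehat{M}$ is obtained by discarding all components other than $M_0$, replacing $M_0|_{(-\infty,T_0)}$ and $M_0|_{(T_l,\infty)}$ by the horizontal rays determined by strong reducibility at $T_0$ and $T_l$ (these rays attach smoothly precisely because $M_0$ is already straight near $T_0$ and $T_l$), and finally translating by $-T_0\bfe_1$. The first thing to verify is that $\Phi_*$ is simplicial; the only nonformal point is the behaviour on the outer faces $d_0$ and $d_l$, where cutting at $T_1$ (resp.\ $T_{l-1}$) and re-translating must be matched against the definitions of $d_0$ and $d_l$ in $N_*\K$, which themselves cut and translate — a direct comparison of the two composites.

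To see that $j_l$ is a homotopy equivalence I would construct a deformation retraction of $N_l\D$ onto $N_l\D^{\perp}$ by local straightening. Near each $T_i$ the transverse single intersection makes $M_0$ the graph $t\mapsto(t,\gamma_i(t))$ of a smooth map $\gamma_i$ on a small interval $(T_i-\delta,T_i+\delta)$; the homotopy replacing $\gamma_i$ by $(1-s\beta)\gamma_i+s\beta\,\gamma_i(T_i)$ for a bump function $\beta$ supported there carries $M$ into $N_l\D^{\perp}$ at $s=1$ while fixing $N_l\D^{\perp}$ throughout. I expect the delicate point here to be continuity of this retraction in families: the radius $\delta$ and the local graph depend on $(T_i,M)$, and when two reduction points collide ($T_i=T_{i+1}$) the straightening intervals overlap, so one must choose $\delta$ and, on overlaps, the target values continuously. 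This is where the explicit description of the topology on $\psi$ (Remark~\ref{rem:K-topology}) is used to guarantee that the family map is continuous through such coincidences.

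Finally I would prove that $\Phi_l$ is a homotopy equivalence by exhibiting the inclusion $\iota_l\co N_l\K\hookrightarrow N_l\D^{\perp}$ as a homotopy inverse: a long knot, being straight outside $[0,T_l]$, lies in $\psi$ and is strongly reducible at $0,T_1,\dotsc,T_l$, so it defines an element of $N_l\D^{\perp}$ with $T_0=0$. Since a $\K$-chain is already straight outside $[0,T_l]$ and has $T_0=0$, cutting and translating do nothing and $\Phi_l\circ\iota_l=\id$ at once. For $\iota_l\circ\Phi_l\simeq\id$ I would compose two homotopies on $N_l\D^{\perp}$: first the linear translation $T_i\mapsto T_i-sT_0$, with $M$ translated by $-sT_0\bfe_1$; then a ``push to infinity'' sending the parts of $M$ in $(-\infty,T_0)\times D^2$ and $(T_l,\infty)\times D^2$ — the left and right knotting together with all non-long components — out towards $\mp\infty$ by stretching the horizontal coordinate away from $[T_0,T_l]$. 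Because $M_0$ is \emph{strongly} reducible at $T_0$ and $T_l$, the stretched manifolds are straight on ever larger neighborhoods of the two ends, so in the topology of $\psi$ they converge, as $s\to 1$, exactly to the straightened $\widehat{M}$, which is the mechanism isolated in Remark~\ref{rem:K-topology}. I regard this push-to-infinity step — and in particular the continuity of the resulting homotopy at the endpoint $s=1$, which rests on the local nature of the $\psi$-topology and on strong reducibility — as the main obstacle of the whole argument.
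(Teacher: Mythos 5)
Your proposal follows essentially the same route as the paper: the cut-off/long-extension functor you call $\Phi_*$ is exactly the paper's $F$, your $\iota_l$ is the paper's $G$ (with $F\circ G=\id$ and the push-to-infinity homotopy for $G\circ F\simeq\id$ relying on the $\psi$-topology of Remark~\ref{rem:K-topology}), and your local straightening for $N_*\D^{\perp}\hookrightarrow N_*\D$ is the content of \cite[Lemma~3.4]{GalatiusRandal-Williams10}, which the paper simply cites. The argument is correct, and you have rightly identified the continuity of the straightening in families and of the endpoint $s=1$ of the push-to-infinity as the delicate points.
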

\begin{proof}
The proof is the same as in Galatius and Randal-Williams \cite[Theorem~3.9]{GalatiusRandal-Williams10}.
That $B\D^{\perp}\to B\D$ induced by the inclusion is a homotopy equivalence follows from \cite[Lemma~3.4]{GalatiusRandal-Williams10},
which states that, for any $(T_0\le\dotsb\le T_l;M)\in N_l\D$, $M$ can be modified to be strongly reducible at $T_i$ in a canonical way.

Define the functor $F\co\D^{\perp}\to\K$ on objects by $(T,M)\mapsto M|_T$,
and on morphisms by
\[
 F(T_0\le\dotsb\le T_l;M):=(0\le T_1-T_0\le\dotsb\le T_l-T_0;\overline{M|_{[T_0,T_l]}}-T_0\bfe_1),
\]
where $\overline{M|_{[T_0,T_l]}}$ is the \emph{long-extension} of $M|_{[T_0,T_l]}$ (see Figure~\ref{fig:long_extension}),
namely
\begin{equation}\label{eq:long-extension}
 \overline{M|_{[T_0,T_l]}}:=
 \bigl((-\infty,T_0]\times\{p_{23}(M|_{T_0})\}\bigr)\cup
 M|_{[T_0,T_l]}\cup
 \bigl([T_l,\infty)\times\{p_{23}(M|_{T_l})\}\bigr),
\end{equation}
where $p_{23}\co\R\times D^2\to D^2$ is the second projection
(\eqref{eq:long-extension} is the same as $(\varphi_{\infty}(T_0,T_l)\times\id)^{-1}(M)$ in \cite[Section 3.2]{GalatiusRandal-Williams10}).
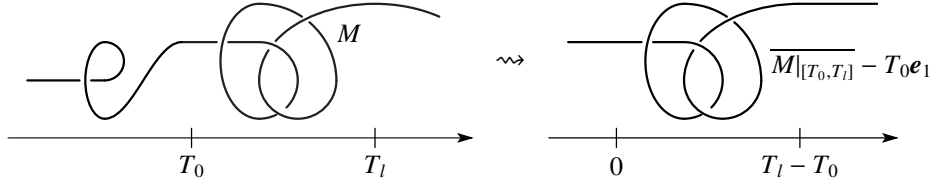
\begin{figure}
\begin{center}
{\unitlength 0.1in%
\begin{picture}(46.0000,7.8500)(7.0000,-11.8500)%
%
\special{pn 13}%
\special{ar 2000 800 200 200 4.7123890 0.7853982}%
%
\special{pn 13}%
\special{ar 2200 800 200 200 6.2831853 3.1415927}%
%
\special{pn 13}%
\special{ar 2000 800 400 400 5.4210153 6.2831853}%
%
\special{pn 13}%
\special{ar 2000 800 400 400 4.7123890 5.3003916}%
%
\special{pn 13}%
\special{ar 2600 800 600 400 3.6820122 5.3003916}%
%
\special{pn 13}%
\special{ar 2600 800 600 400 3.1415927 3.5464844}%
%
\special{pn 13}%
\special{pa 1600 600}%
\special{pa 1780 600}%
\special{fp}%
%
\special{pn 13}%
\special{pa 1840 600}%
\special{pa 2000 600}%
\special{fp}%
%
\special{pn 13}%
\special{ar 2000 700 200 300 1.5707963 4.7123890}%
%
\special{pn 13}%
\special{ar 2000 800 200 200 1.1071487 1.5707963}%
%
\special{pn 8}%
\special{pa 700 1100}%
\special{pa 3100 1100}%
\special{fp}%
\special{sh 1}%
\special{pa 3100 1100}%
\special{pa 3033 1080}%
\special{pa 3047 1100}%
\special{pa 3033 1120}%
\special{pa 3100 1100}%
\special{fp}%
%
\special{pn 8}%
\special{pa 1650 1150}%
\special{pa 1650 1050}%
\special{fp}%
%
\special{pn 8}%
\special{pa 2600 1150}%
\special{pa 2600 1050}%
\special{fp}%
\put(16.5000,-12.5000){\makebox(0,0){$T_0$}}%
\put(26.0000,-12.5000){\makebox(0,0){$T_l$}}%
%
\special{pn 13}%
\special{ar 4200 800 200 200 4.7123890 0.7853982}%
%
\special{pn 13}%
\special{ar 4400 800 200 200 6.2831853 3.1415927}%
%
\special{pn 13}%
\special{ar 4200 800 400 400 5.4210153 6.2831853}%
%
\special{pn 13}%
\special{ar 4200 800 400 400 4.7123890 5.3003916}%
%
\special{pn 13}%
\special{ar 4800 800 600 400 3.6820122 4.7123890}%
%
\special{pn 13}%
\special{ar 4800 800 600 400 3.1415927 3.5464844}%
%
\special{pn 13}%
\special{pa 3600 600}%
\special{pa 3980 600}%
\special{fp}%
%
\special{pn 13}%
\special{pa 4040 600}%
\special{pa 4200 600}%
\special{fp}%
%
\special{pn 13}%
\special{ar 4200 700 200 300 1.5707963 4.7123890}%
%
\special{pn 13}%
\special{ar 4200 800 200 200 1.1071487 1.5707963}%
%
\special{pn 13}%
\special{pa 4800 400}%
\special{pa 5200 400}%
\special{fp}%
\put(24.0000,-6.0000){\makebox(0,0)[lb]{$M$}}%
\put(46.5000,-8.0000){\makebox(0,0)[lb]{$\overline{M|_{[T_0,T_l]}}-T_0\bfe_1$}}%
%
\special{pn 8}%
\special{pa 3500 1100}%
\special{pa 5300 1100}%
\special{fp}%
\special{sh 1}%
\special{pa 5300 1100}%
\special{pa 5233 1080}%
\special{pa 5247 1100}%
\special{pa 5233 1120}%
\special{pa 5300 1100}%
\special{fp}%
%
\special{pn 8}%
\special{pa 3850 1150}%
\special{pa 3850 1050}%
\special{fp}%
%
\special{pn 8}%
\special{pa 4800 1150}%
\special{pa 4800 1050}%
\special{fp}%
\put(38.5000,-12.5000){\makebox(0,0){$0$}}%
\put(48.0000,-12.5000){\makebox(0,0){$T_l-T_0$}}%
\put(33.0000,-7.0000){\makebox(0,0){$\rightsquigarrow$}}%
\special{pn 13}%
\special{pa 1200 1000}%
\special{pa 1205 1000}%
\special{pa 1210 999}%
\special{pa 1215 999}%
\special{pa 1220 998}%
\special{pa 1245 988}%
\special{pa 1255 982}%
\special{pa 1265 974}%
\special{pa 1270 971}%
\special{pa 1275 966}%
\special{pa 1280 962}%
\special{pa 1295 947}%
\special{pa 1300 941}%
\special{pa 1305 936}%
\special{pa 1320 918}%
\special{pa 1330 904}%
\special{pa 1335 898}%
\special{pa 1350 877}%
\special{pa 1355 869}%
\special{pa 1360 862}%
\special{pa 1365 854}%
\special{pa 1370 847}%
\special{pa 1380 831}%
\special{pa 1385 824}%
\special{pa 1415 776}%
\special{pa 1420 769}%
\special{pa 1430 753}%
\special{pa 1435 746}%
\special{pa 1440 738}%
\special{pa 1445 731}%
\special{pa 1450 723}%
\special{pa 1465 702}%
\special{pa 1470 696}%
\special{pa 1480 682}%
\special{pa 1495 664}%
\special{pa 1500 659}%
\special{pa 1505 653}%
\special{pa 1520 638}%
\special{pa 1525 634}%
\special{pa 1530 629}%
\special{pa 1535 626}%
\special{pa 1545 618}%
\special{pa 1555 612}%
\special{pa 1580 602}%
\special{pa 1585 601}%
\special{pa 1590 601}%
\special{pa 1595 600}%
\special{pa 1600 600}%
\special{fp}%
%
\special{pn 13}%
\special{ar 1200 800 100 200 1.5707963 4.7123890}%
%
\special{pn 13}%
\special{ar 1200 700 100 100 4.7123890 1.5707963}%
%
\special{pn 13}%
\special{pa 1200 800}%
\special{pa 1130 800}%
\special{fp}%
%
\special{pn 13}%
\special{pa 1070 800}%
\special{pa 800 800}%
\special{fp}%
\end{picture}}%
\end{center}
\caption{The functor $F$ from the proof of Proposition~\ref{prop:zig-zag}; cut-off and long-extension}
\label{fig:long_extension}
\end{figure}
Notice that $M|_{[T_0,T_l]}$ is a connected subspace of the long component of $M$ (see Remark~\ref{rem:separated}),
and its long extension is also connected.
This induces a map $F\co N_*\D^{\perp}\to N_*\K$ of simplicial spaces.

We have a map $G\co N_*\K\to N_*\D^{\perp}$, defined in level $0$ by
$G(p):=(0,\R^1\times\{p\})$,
and by the natural inclusion in positive levels (letting $T_0:=0$).
This is just a simplicial map up to homotopy (in levels $0$ and $1$),
but is a levelwise homotopy inverse to $F$;
the composite $F\circ G$ is the identity,
and the other composite $G\circ F$ is given by
\[
 G\circ F(T_0\le\dotsb\le T_l;M)=(0\le T_1-T_0\le\dotsb\le T_l-T_0;\overline{M_0|_{[T_0,T_l]}})
\]
which is isotopic to the identity via the same homotopy as the one exhibited in the last line in the proof of \cite[Theorem~3.9]{GalatiusRandal-Williams10} (see Figure~\ref{fig:h_s}).
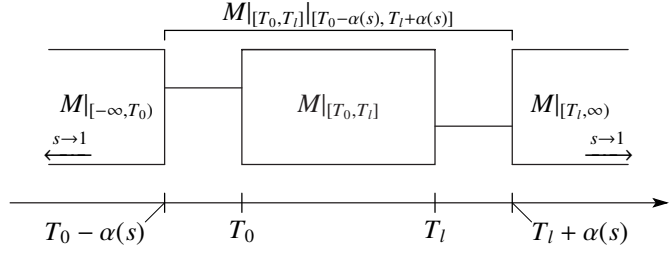
\begin{figure}
\begin{center}
{\unitlength 0.1in%
\begin{picture}(36.4000,11.6500)(7.6000,-15.0000)%
%
\special{pn 8}%
\special{pa 2200 600}%
\special{pa 3200 600}%
\special{pa 3200 1200}%
\special{pa 2200 1200}%
\special{pa 2200 600}%
\special{pa 3200 600}%
\special{fp}%
\put(27.0000,-9.0000){\makebox(0,0){$M|_{[T_0,T_l]}$}}%
%
\special{pn 8}%
\special{pa 2200 800}%
\special{pa 1800 800}%
\special{fp}%
%
\special{pn 8}%
\special{pa 3200 1000}%
\special{pa 3600 1000}%
\special{fp}%
%
\special{pn 8}%
\special{pa 1200 600}%
\special{pa 1800 600}%
\special{fp}%
%
\special{pn 8}%
\special{pa 1800 600}%
\special{pa 1800 1200}%
\special{fp}%
%
\special{pn 8}%
\special{pa 1800 1200}%
\special{pa 1200 1200}%
\special{fp}%
%
\special{pn 8}%
\special{pa 4200 600}%
\special{pa 3600 600}%
\special{fp}%
%
\special{pn 8}%
\special{pa 3600 600}%
\special{pa 3600 1200}%
\special{fp}%
%
\special{pn 8}%
\special{pa 3600 1200}%
\special{pa 4200 1200}%
\special{fp}%
%
\special{pn 8}%
\special{pa 1000 1400}%
\special{pa 4400 1400}%
\special{fp}%
\special{sh 1}%
\special{pa 4400 1400}%
\special{pa 4333 1380}%
\special{pa 4347 1400}%
\special{pa 4333 1420}%
\special{pa 4400 1400}%
\special{fp}%
%
\special{pn 8}%
\special{pa 2200 1450}%
\special{pa 2200 1350}%
\special{fp}%
%
\special{pn 8}%
\special{pa 3200 1450}%
\special{pa 3200 1350}%
\special{fp}%
\put(22.0000,-15.5000){\makebox(0,0){$T_0$}}%
\put(32.0000,-15.5000){\makebox(0,0){$T_l$}}%
%
\special{pn 8}%
\special{pa 3600 1450}%
\special{pa 3600 1350}%
\special{fp}%
\put(39.0000,-9.0000){\makebox(0,0){$M|_{[T_l,\infty)}$}}%
\put(15.0000,-9.0000){\makebox(0,0){$M|_{[-\infty,T_0)}$}}%
%
\special{pn 8}%
\special{pa 1800 1450}%
\special{pa 1800 1350}%
\special{fp}%
\put(27.0000,-4.0000){\makebox(0,0){$\overline{M|_{[T_0,T_l]}}|_{[T_0-\alpha(s),\,T_l+\alpha(s)]}$}}%
%
\special{pn 8}%
\special{pa 1800 500}%
\special{pa 1800 550}%
\special{fp}%
%
\special{pn 8}%
\special{pa 3600 500}%
\special{pa 1800 500}%
\special{fp}%
%
\special{pn 8}%
\special{pa 3600 500}%
\special{pa 3600 550}%
\special{fp}%
\put(41.0000,-11.0000){\makebox(0,0){$\xrightarrow{s\to 1}$}}%
\put(13.0000,-11.0000){\makebox(0,0){$\xleftarrow{s\to 1}$}}%
\put(17.0000,-15.0000){\makebox(0,0)[rt]{$T_0-\alpha(s)$}}%
%
\special{pn 4}%
\special{pa 1700 1500}%
\special{pa 1800 1400}%
\special{fp}%
\put(37.0000,-15.0000){\makebox(0,0)[lt]{$T_l+\alpha(s)$}}%
%
\special{pn 4}%
\special{pa 3700 1500}%
\special{pa 3600 1400}%
\special{fp}%
\end{picture}}%
\end{center}
\caption{The homotopy in the proof of Proposition~\ref{prop:zig-zag} from $G\circ F$ to the identity; where $\alpha(s)\to\infty$ ($s\nearrow 1$)}
\label{fig:h_s}
\end{figure}
This homotopy firstly extends $M|_{(T_0-\epsilon,T_0]}$ and $M|_{[T_l,T_l+\epsilon)}$ respectively to left and right so that $M|_{(-\infty,T_0)}$ and $M|_{(T_l,\infty)}$ (in which all the one-side long components are contained) escape respectively to ``$\{\mp\infty\}\times D^2$''.
Then they ``vanish'' at $s=1$ by definition of the topology of $\psi$, see Remark~\ref{rem:K-topology}).
Simultaneously this homotopy translates the manifold by $-T_0$ in the direction of $\R^1$.
This homotopy keeps manifolds strongly reducible at each $T_i$.

Therefore $F\co N_*\D^{\perp}\to N_*\K$ is a levelwise homotopy equivalence of good simplicial spaces (Lemma~\ref{lem:good}), and $B\D^{\perp}\to B\K$ is a homotopy equivalence by Lemma~\ref{lem:levelwise_equivalence}.
\end{proof}

Following Galatius and Randal-Williams \cite{GalatiusRandal-Williams10}, we denote the element of $B\D$ represented by $\bigl((T_0\le\dotsb\le T_l;M),(\lambda_0,\dotsc,\lambda_l)\bigr)\in N_l\D\times\Delta^l$ as a formal sum $\sum_{0\le i\le l}\lambda_iT_i$ (this notation is compatible with the relation \eqref{eq:relation}).

\begin{thm}\label{thm:BK=psi_s}
The forgetful map $u\co B\D\to\psi$ given by $\sum_i\lambda_iT_i\mapsto M$ is a weak homotopy equivalence.
Thus $B\K$ is weakly equivalent to $\psi$.
\end{thm}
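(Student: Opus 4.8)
The plan is to apply the general criterion, underlying the arguments of \cite{GalatiusRandal-Williams10}, that a \emph{microfibration with weakly contractible fibres is a weak homotopy equivalence}. The proof of the first assertion then splits into two independent tasks: identifying and contracting the fibres of $u$, and checking that $u$ is a microfibration. The concluding statement $B\K\simeq\psi$ follows by combining the first assertion with Proposition~\ref{prop:zig-zag}, which provides $B\K\xleftarrow{\simeq}B\D^{\perp}\xrightarrow{\simeq}B\D$.

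First I would compute the fibre of $u$ over a fixed $M\in\psi$. Since every morphism of $\D$ keeps the underlying manifold fixed, the subcategory of $\D$ lying over $M$ is exactly the linearly ordered set $\D_M:=\{T\in\R^1\mid M\text{ is reducible at }T\}$, so that $u^{-1}(M)=B\D_M$. The set $\D_M$ is \emph{nonempty} (by the defining condition of $\psi$) and \emph{open} in $\R^1$: reducibility at $T$ is a transverse one-point condition, hence no other branch of $M$ approaches $\{T\}\times D^2$, and the single transverse point persists for nearby values of $T$. Writing $\D_M$ as the disjoint union of its connected components, each an open interval $I$, one contracts $B I$ by a monotone ``straight-line'' family $H_s\co I\to I$ from the identity to a constant map: each $H_s$ is order-preserving, so induces a simplicial self-map of the nerve, and the family is continuous in $s$, hence realizes to a contraction of $B I$. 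The ordinal-sum structure of the components then assembles the $B I$ into a contractible space (an iterated join of the $B I$). This is precisely the contractibility of the realization of the nerve of a nonempty open subset of $\R^1$ used in \cite{GalatiusRandal-Williams10}.

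Second I would verify that $u$ is a microfibration, and here the key observation is that $\D$ is an \emph{open} subset of $\R^1\times\psi$: reducibility at $T$ depends only on the restriction of $M$ to the compact slice $\{T\}\times D^2$ and is an open condition there, which is detected by the compactly-supported topology of $\psi$ (Remark~\ref{rem:K-topology}). Given a compact space $K$, a map $K\to B\D$, and a homotopy $h\co K\times[0,1]\to\psi$ of its composite with $u$, I can then lift simply by keeping the $\R^1$-coordinates fixed: a point over $M$ is a formal sum $\sum_i\lambda_iT_i$ with each $(T_i,M)\in\D$, and by openness of $\D$ each pair $(T_i,M)$ remains in $\D$ when $M$ is replaced by $h(-,t)$ for small $t$. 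Compactness of $K$ produces a uniform $\epsilon>0$ for which $\sum_i\lambda_iT_i$ over $h(-,t)$ is a well-defined lift on $K\times[0,\epsilon]$; because the $T_i$ do not move, this is automatically compatible with the simplicial identifications defining $B\D$ (coincidences $T_i=T_{i+1}$ being absorbed by degeneracies).

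The step demanding the most care is the microfibration property, specifically the uniform short-time lifting in the delicate topology of $\psi$, in which components of a manifold may drift off to infinity: one must argue that openness of $\D$ in $\R^1\times\psi$ is uniform over the whole compact parameter space $K$ and over the (varying) simplices carrying the points of $B\D$, so that a single $\epsilon$ works. The contractibility of the fibres is comparatively formal, but still requires the topological bookkeeping of the join/ordinal-sum assembly noted above. Granting these two points, the microfibration criterion shows that $u$ is a weak homotopy equivalence, and combining with Proposition~\ref{prop:zig-zag} gives $B\K\simeq\psi$, as claimed.
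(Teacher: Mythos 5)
Your overall strategy --- the criterion that a Serre microfibration with nonempty weakly contractible fibres is a weak homotopy equivalence --- is a genuinely different route from the one the paper takes, and the fibre half of your argument is essentially sound: $u^{-1}(M)$ is the realization of the nerve of the totally ordered set $\D_M$ of reducibility points of $M$, which is nonempty and open in $\R^1$, and the straight-line homotopy $\sum_i\lambda_iT_i\mapsto(1-s)\sum_i\lambda_iT_i+sT^*$ onto any fixed $T^*\in\D_M$ already contracts it, since every finite subset of $\D_M$ is a chain; the component-by-component join bookkeeping is unnecessary. This affine structure on the fibres is exactly what the paper also exploits, in its construction of the homotopy $h$.

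The gap is in the microfibration step, and it is the step you yourself flag. Your lift keeps the coordinates $T_i$ fixed, so it requires that $h(x,t)$ be reducible at every $T$ in the support of $g_0(x)$, for all $x\in K$ simultaneously and all $t\le\epsilon$. Openness of $\D$ in $\R^1\times\psi$ gives such an $\epsilon$ for each fixed pair $(x,T)$, but compactness of $K$ does not upgrade this to a uniform $\epsilon$: the set $A=\{(x,T)\mid T\in\mathrm{supp}\,g_0(x)\}$ need not have closure contained in the open set $\{(x,T)\mid(T,h(x,0))\in\D\}$. Indeed, in the quotient topology on $B\D$ a point $\sum_i\lambda_iT_i$ has arbitrarily close neighbours carrying an extra reducibility point $T'$ with arbitrarily small coefficient, and such a $T'$ may lie arbitrarily close to a value where the underlying manifold meets the slice tangentially or in several points; for those pairs the time for which reducibility persists under $h$ tends to $0$, so no single $\epsilon$ works and it is not even clear that $u$ is a Serre microfibration. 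The paper sidesteps this entirely: it verifies the relative lifting criterion directly, producing a global lift $\hat{g}$ of $f\co\overline{D}{}^m\to\psi$ from a partition of unity subordinate to the cover $\{U_a\}$ by reducibility loci, and then homotoping $\hat{g}|_{\partial\overline{D}{}^m}$ to the prescribed boundary lift $\hat{f}$ through lifts via the affine interpolation on fibres. To complete your proof you would either need a genuine (and nontrivial) argument for the microfibration property, or you should switch to that partition-of-unity argument.
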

\begin{proof}
The proof is the same as that of \cite[Theorem~3.10]{GalatiusRandal-Williams10}:
Given the following commutative diagram of strict arrows,
\[
 \xymatrix{
  \partial \overline{D}{}^m\ar[r]^-{\hat{f}}\ar@{^(->}[d] & B\D\ar[d]^-u \\
  \overline{D}{}^m\ar[r]^-{f}\ar@{.>}[ru]^-{g} & \psi
 }
\]
we find a dotted $g\co\overline{D}{}^m\to B\D$ that makes the diagram commutative.
This means that the relative homotopy group $\pi_m(\psi',B\D)$ ($\psi'$ is the mapping cylinder of $u$) vanishes for all $m$, and $u$ induces an isomorphism of homotopy groups in any dimension.

For $a\in\R$ let $U_a:=\{x\in \overline{D}{}^m\mid f(x)\in\psi\text{ is reducible at }a\}$.
This is an open subspace of $\overline{D}{}^m$ and $\{U_a\}_{a\in\R}$ is an open covering of $\overline{D}{}^m$
because, by definition, such an $a$ exists for any $M\in\psi$.
So by compactness we can pick finitely many $a_0<\dotsb<a_k$ such that $\{U_{a_i}\}_{0\le i\le k}$ covers $\overline{D}{}^m$.
Pick a partition of unity $\{\lambda_i\co\overline{D}{}^m\to[0,1]\}_{0\le i\le k}$ subordinate to the cover.
Using $\lambda_i$ as a formal coefficient of $a_i$ gives a map
\[
 \hat{g}\co\overline{D}{}^m\to B\D,\quad
 \hat{g}(x):=\sum_{0\le i\le k}\lambda_i(x)a_i
\]
(represented by elements in $N_k\D\times\Delta^k$)
which lifts $f$, namely $u\circ\hat{g}=f$.
Now we produce a homotopy $h\co[0,1]\times\partial\overline{D}{}^m\to B\D$ such that $h(0,-)=\hat{g}|_{\partial \overline{D}{}^m}(-)$, $h(1,-)=\hat{f}(-)$ and $h(s,-)$ lifts $f|_{\partial\overline{D}{}^m}$ for all $s$;
if such an $h$ exists, then we can define the desired map $g$ by
\[
 g(x):=
 \begin{cases}
 \hat{g}(2x) & \abs{x}\le 1/2,\\
 h(2\abs{x}-1,x/\abs{x}) & \abs{x}\ge 1/2.
 \end{cases}
\]
Since $\hat{f}$ is also a lift of $f|_{\partial \overline{D}{}^m}$, we may suppose that $\hat{f}$ is of the form
\[
 \hat{f}(x)=\sum_{0\le i\le l}\mu_i(x)b_i
\]
for some $\mu_0,\dotsc,\mu_l\ge 0$, $\sum_i\mu_i(x)=1$ and $b_0<\dotsb<b_l$
(underlying manifolds $f(x)$ and $u(\hat{f}(x))$ are the same).
Let $c_0<\dotsb<c_n$ be the re-ordering of the set $\{a_i\}_i\cup\{ b_j\}_j$ in ascending order.
Using the relation \eqref{eq:relation} we can write $\hat{g}|_{\partial\overline{D}{}^m}$ and $\hat{f}$ as
\begin{align*}
 \hat{g}|_{\partial\overline{D}{}^m}(x)&=\sum_{0\le i\le n}\alpha_i(x)c_i\quad\text{for some}\quad \alpha_0,\dotsc,\alpha_n\ge 0,\quad\sum_i\alpha_i=1,\\
 \hat{f}(x)&=\sum_{0\le i\le n}\beta_i(x)c_i\quad\text{for some}\quad \beta_0,\dotsc,\beta_n\ge 0,\quad\sum_i\beta_i=1
\end{align*}
(represented by elements in $N_n\D\times\Delta^n$).
We define $h$ using the affine structure on the fibers of $u$;
\[
 h(s,x):=s\hat{g}|_{\partial\overline{D}{}^m}(x)+(1-s)\hat{f}(x):=\sum_{0\le i\le n}(s\alpha_i(x)+(1-s)\beta_i(x))c_i.\qedhere
\]
\end{proof}

\begin{rem}
We have topologized the spaces of morphisms of various categories so that the identity morphisms form disjoint components,
as was also done in \cite{GalatiusRandal-Williams10}.
We may instead topologize the total space of morphisms in $\K$ (resp.~$\D$) as a subspace of $[0,\infty)\times\psi$ (resp.~$\R\times\R\times\psi$) and with the latter topology we can prove the similar results to the above.
An advantage of the former topology is that it makes the proof of goodness of the nerves easier.
\end{rem}
\section{The space of reduced ropes}\label{s:rope}
In this section we show that the conjecture of Mostovoy is true.
We first characterize the weak homotopy type of $\psi$ as that of the space of \emph{reducible ropes}, and then prove that the space of reducible ropes is weakly equivalent to the space of Mostovoy's short ropes.

\subsection{$B\K$ and the space of reducible ropes}
\begin{defn}[Mostovoy \cite{Mostovoy02}]\label{def:short_rope}
A \emph{rope} is a compact, connected $1$--dimensional submanifold $r\subset\R^1\times D^2$ with non-empty boundary $\partial r=\{\partial_0r,\partial_1r\}$, $\partial_ir\in\{i\}\times D^2$.
Let $R$ be the set of all ropes that are reducible at some $t\in(0,1)$ (see Figure~\ref{fig:reducible_ropes}),
topologized as a subspace of $\mathrm{Emb}([0,1],\R\times D^2)/\mathrm{Diff}^+([0,1])$.
\end{defn}
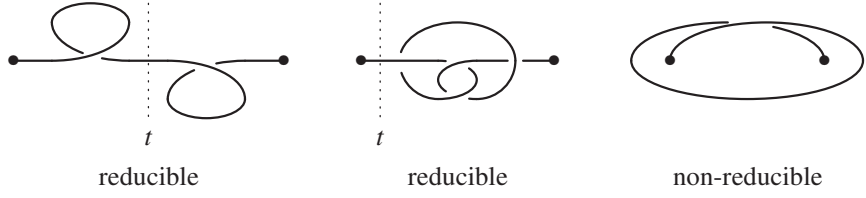
\begin{figure}
\begin{center}
{\unitlength 0.1in%
\begin{picture}(46.3700,8.2700)(7.6300,-15.2700)%
%
\special{pn 13}%
\special{ar 1200 800 400 200 6.2831853 1.5707963}%
%
\special{pn 13}%
\special{ar 1600 800 400 200 2.2142974 3.1415927}%
%
\special{pn 13}%
\special{ar 1400 800 200 100 3.1415927 6.2831853}%
%
\special{pn 13}%
\special{ar 1600 800 400 200 1.5707963 1.9295670}%
%
\special{pn 13}%
\special{pa 1000 1000}%
\special{pa 1200 1000}%
\special{fp}%
%
\special{pn 13}%
\special{pa 1600 1000}%
\special{pa 1800 1000}%
\special{fp}%
%
\special{pn 13}%
\special{ar 1800 1200 400 200 4.7123890 6.2831853}%
%
\special{pn 13}%
\special{ar 2200 1200 400 200 4.3318826 4.7123890}%
%
\special{pn 13}%
\special{ar 2200 1200 400 200 3.1415927 3.9935590}%
%
\special{pn 13}%
\special{ar 2000 1200 200 100 6.2831853 3.1415927}%
%
\special{pn 13}%
\special{pa 2200 1000}%
\special{pa 2400 1000}%
\special{fp}%
\put(10.0000,-10.0000){\makebox(0,0){$\bullet$}}%
\put(24.0000,-10.0000){\makebox(0,0){$\bullet$}}%
\put(28.0000,-10.0000){\makebox(0,0){$\bullet$}}%
%
\special{pn 8}%
\special{pa 1700 700}%
\special{pa 1700 1300}%
\special{dt 0.045}%
\put(17.0000,-14.0000){\makebox(0,0){$t$}}%
%
\special{pn 13}%
\special{pa 2800 1000}%
\special{pa 3200 1000}%
\special{fp}%
%
\special{pn 13}%
\special{ar 3200 1000 200 200 1.5707963 2.8198421}%
%
\special{pn 13}%
\special{ar 3400 1000 200 200 6.2831853 1.5707963}%
%
\special{pn 13}%
\special{ar 3300 1000 300 200 3.3683915 6.2831853}%
%
\special{pn 13}%
\special{ar 3200 1100 200 100 5.6396842 1.5707963}%
%
\special{pn 13}%
\special{ar 3400 1100 200 100 2.4980915 4.7123890}%
%
\special{pn 13}%
\special{ar 3200 1100 200 100 4.7123890 4.9097845}%
%
\special{pn 13}%
\special{pa 3400 1000}%
\special{pa 3560 1000}%
\special{fp}%
%
\special{pn 13}%
\special{pa 3640 1000}%
\special{pa 3800 1000}%
\special{fp}%
\put(38.0000,-10.0000){\makebox(0,0){$\bullet$}}%
%
\special{pn 8}%
\special{pa 2900 700}%
\special{pa 2900 1300}%
\special{dt 0.045}%
\put(29.0000,-14.0000){\makebox(0,0){$t$}}%
%
\special{pn 13}%
\special{ar 3400 1100 200 100 1.5707963 1.7681919}%
\put(44.0000,-10.0000){\makebox(0,0){$\bullet$}}%
\put(52.0000,-10.0000){\makebox(0,0){$\bullet$}}%
\put(17.0000,-16.0000){\makebox(0,0){reducible}}%
\put(48.0000,-16.0000){\makebox(0,0){non-reducible}}%
%
\special{pn 13}%
\special{ar 4900 1000 500 200 3.1415927 6.2831853}%
%
\special{pn 13}%
\special{ar 4800 1000 600 200 6.2831853 3.1415927}%
%
\special{pn 13}%
\special{ar 4700 1000 500 200 5.2023463 6.2831853}%
%
\special{pn 13}%
\special{ar 4700 1000 500 200 3.1415927 4.7123890}%
\put(33.0000,-16.0000){\makebox(0,0){reducible}}%
\end{picture}}%
\caption{Reducible and non-reducible ropes}
\label{fig:reducible_ropes}
\end{center}
\end{figure}

The function $f(t):=\tan\pi(t-(1/2))$
gives an orientation preserving diffeomorphism $f\co(0,1)\xrightarrow{\cong}\R$.
Define the ``cut-off'' map $c\co R\to\psi$ by
\[
 c(r):=(f\times\id_{D^2})(r|_{(0,1)}).
\]
This map is defined since, for any reducible rope $r$, $c(r)$ has exactly one long component.

Our aim is to show that $c$ is a weak equivalence, and for this we introduce the following posets as interfaces between $R$ and $\psi$.

\begin{defn}\label{def:E}
Define a poset $\E$ by
\[
 \E:=\{(t,r)\in(0,1)\times R\mid
 r\text{ is reducible at }t\}.
\]
Define the partial order $\le$ on $\E$ so that $(t,r)<(t',r')$ if and only if $r=r'$ and $t<t'$.
We regard $\E$ as a small category in the same way as $\D$.
The total space of all morphisms is topologized as a subspace of $\bigl(\Delta\sqcup((0,1)\times(0,1)\setminus\Delta)\bigr)\times R$, where $\Delta$ is the diagonal set.

Define $\E^{\perp}$ as a subposet of $\E$ consisting of $(t,r)$ with $r$ strongly reducible at $t$.
\end{defn}

\begin{lem}
The simplicial spaces $N_*\E$ and $N_*\E^{\perp}$ are good.
\end{lem}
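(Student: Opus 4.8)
The plan is to mimic the proof of Lemma~\ref{lem:good} essentially verbatim, the only relevant input being the way the morphism spaces of $\E$ and $\E^{\perp}$ have been topologized. Recall that a simplicial space is good precisely when each degeneracy $s_iA_l\hookrightarrow A_{l+1}$ is a closed cofibration, so I would first unwind the levels of the nerve. As in the discussion preceding Lemma~\ref{lem:good}, an element of $N_{l+1}\E$ is a pair $(t_0\le\dotsb\le t_{l+1};r)$ with $r\in R$ reducible at every $t_j$, and the image $s_iN_l\E\subset N_{l+1}\E$ consists exactly of those sequences with $t_i=t_{i+1}$, i.e.\ those whose $i$-th morphism is an identity.

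The key point I would stress is that the total morphism space of $\E$ has been topologized as a subspace of $\bigl(\Delta\sqcup((0,1)\times(0,1)\setminus\Delta)\bigr)\times R$, so that the identity morphisms (those lying over $\Delta$) form a clopen summand. Consequently, inside $N_{l+1}\E$ --- itself a subspace of the $(l+1)$-st power of the morphism total space --- the locus where the $i$-th morphism is an identity is the intersection with the preimage of this clopen summand under the $i$-th projection, hence is again clopen. Since $\E$ is a poset, a morphism is an identity iff its source and target coincide, so this clopen locus is exactly $s_iN_l\E$: every such sequence is the degeneracy of the sequence obtained by deleting the repeated entry. Thus $s_iN_l\E$ is a union of connected components of $N_{l+1}\E$.

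Finally, an inclusion of a clopen subspace is a closed cofibration --- the pair $(N_{l+1}\E,s_iN_l\E)$ has the homotopy extension property because $N_{l+1}\E$ splits as the topological disjoint union of $s_iN_l\E$ and its complement --- so goodness of $N_*\E$ follows, and the identical argument gives goodness of $N_*\E^{\perp}$, whose morphism space carries the subspace topology from the same disjoint-union decomposition. The single place where one must be slightly careful is the claim that the clopen ``identity'' locus really coincides with the degeneracy image and contains no nondegenerate simplices; but this is immediate here because $\E$ and $\E^{\perp}$ are posets, so there are no non-identity isomorphisms that could land in the diagonal component. I therefore expect no genuine obstacle beyond transcribing the topological bookkeeping.
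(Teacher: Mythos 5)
Your argument is correct and is essentially the paper's own: the paper proves this lemma by citing the proof of Lemma~\ref{lem:good}, which likewise observes that the degeneracy image is the locus of sequences involving identity morphisms, hence (by the choice of topology separating $\Delta$ from its complement) a union of connected components, so the inclusion is a closed cofibration. Your write-up merely makes the clopen-summand bookkeeping explicit.
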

\begin{proof}
The same as the proof of Lemma~\ref{lem:good}.
\end{proof}

Any element in $N_l\E$ can be expressed as a pair $(t_0\le\dotsb\le t_l;r)$ where $0<t_i<1$ and $r\in R$ is reducible at each $t_i$.

\begin{prop}\label{prop:zig_zag2}
There exists a zig-zag of levelwise homotopy equivalences
$N_*\E\leftarrow N_*\E^{\perp}\to N_*\D^{\perp}$.
Consequently $B\E$ is weakly homotopy equivalent to $B\D$.
\end{prop}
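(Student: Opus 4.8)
The plan is to mimic the proof of Proposition~\ref{prop:zig-zag}: produce the two levelwise homotopy equivalences, invoke Lemma~\ref{lem:levelwise_equivalence} together with the goodness of the nerves, and then read off $B\E\simeq B\D$ by composing $B\E\simeq B\E^{\perp}\simeq B\D^{\perp}$ with the equivalence $B\D^{\perp}\simeq B\D$ from Proposition~\ref{prop:zig-zag}. For the inclusion $N_*\E^{\perp}\hookrightarrow N_*\E$ I would repeat the argument used there for $N_*\D^{\perp}\hookrightarrow N_*\D$: by \cite[Lemma~3.4]{GalatiusRandal-Williams10} a rope reducible at $t_0\le\dots\le t_l$ is made strongly reducible at these points by a canonical local modification inserting a cylindrical collar near each $t_i$. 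As every $t_i$ lies in $(0,1)$, this modification is supported away from $\{0,1\}\times D^2$ and keeps $r$ inside $R$; carried out simultaneously over all $t_i$ it is a deformation retraction of $N_l\E$ onto $N_l\E^{\perp}$, so the inclusion is a levelwise homotopy equivalence.

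The second arrow $N_*\E^{\perp}\to N_*\D^{\perp}$ is the cut-off map $(t_0\le\dots\le t_l;r)\mapsto(f(t_0)\le\dots\le f(t_l);c(r))$. Since $f\times\id_{D^2}$ is an orientation-preserving diffeomorphism taking cylindrical pieces to cylindrical pieces, strong reducibility of $r$ at $t_i$ becomes strong reducibility of $c(r)$ at $f(t_i)$, so the map lands in $N_*\D^{\perp}$ and is plainly simplicial. To prove it is a levelwise equivalence I would single out the ``standard'' cores $N_l\E^{\perp}_{0}\subset N_l\E^{\perp}$ and $N_l\D^{\perp}_{0}\subset N_l\D^{\perp}$, consisting of those $r$ whose tails $r|_{(0,t_0]},r|_{[t_l,1)}$ are horizontal segments, respectively of those $M$ with no one-sided components and with $M|_{(-\infty,T_0]},M|_{[T_l,\infty)}$ horizontal, where $T_i:=f(t_i)$. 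On these cores the cut-off map is a homeomorphism: its inverse is the long-extension \eqref{eq:long-extension} of $(f^{-1}\times\id_{D^2})(M)$ followed by capping off on $\{0,1\}\times D^2$, and when the tails are already straight, cutting off and long-extending are mutually inverse.

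It remains to exhibit deformation retractions of $N_l\E^{\perp}$ and $N_l\D^{\perp}$ onto their cores, fixing $r|_{[t_0,t_l]}$ and $M|_{[T_0,T_l]}$ and compatible with the cut-off map. On the $\D^{\perp}$ side this is the homotopy already used in Proposition~\ref{prop:zig-zag} (Figure~\ref{fig:h_s}, Remark~\ref{rem:K-topology}): lengthening the collars at $T_0,T_l$ sends the exterior $M|_{(-\infty,T_0)}\cup M|_{(T_l,\infty)}$ off to $\{\mp\infty\}\times D^2$, retracting $N_l\D^{\perp}$ onto $N_l\D^{\perp}_{0}$. Granting the analogous retraction on the $\E^{\perp}$ side, the square formed by these retractions and the cut-off map commutes strictly, since both composites first standardize the exterior and then cut off. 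As the $\D^{\perp}$-side retraction is a deformation retraction, the cut-off map is homotopic to the composite of the $\E^{\perp}$-side retraction, the bottom homeomorphism, and the inclusion of the $\D^{\perp}$-core---each a homotopy equivalence---so it is itself a levelwise homotopy equivalence. Goodness of the nerves and Lemma~\ref{lem:levelwise_equivalence} then give $B\E^{\perp}\simeq B\D^{\perp}$, and hence $B\E\simeq B\D$.

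I expect the construction of the $\E^{\perp}$-side retraction to be the main obstacle, and it is genuinely different from its $\D^{\perp}$-side counterpart: ropes are compact, so the exterior tails cannot be pushed off to infinity and made to vanish but must instead be straightened inside $R$. Reducibility at $t_0$ and $t_l$ cuts $r$ along $\{t_0\}\times D^2$ and $\{t_l\}\times D^2$ into the core $r|_{[t_0,t_l]}$ and two tail arcs, each an embedded arc in a solid cylinder---$(-\infty,t_0]\times D^2$ or $[t_l,1)\times D^2$---with one end on the fixed strongly reducible collar and a free end on $\{0\}\times D^2$ or $\{1\}\times D^2$. I would straighten each tail to a horizontal segment by a canonical isotopy, using that an embedded arc with a free endpoint in a solid cylinder is unknotted, and then check that this straightening can be chosen continuously in $r$ and compatibly with the collars at every $t_i$, so that the whole homotopy stays within reducible ropes. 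Making this straightening uniform and simplicially coherent is where the real work lies.
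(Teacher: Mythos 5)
Your overall strategy coincides with the paper's: the first arrow is handled identically, and your scheme of retracting each side onto a ``core'' with straight tails and observing that the cut-off map is a homeomorphism between cores is a repackaging of the paper's argument, which instead writes down an explicit inverse $\Gamma(T_0\le\dots\le T_l;M):=(t_0\le\dots\le t_l;(f^{-1}\times\id_{D^2})(\overline{M|_{[T_0,T_l]}}))$ and checks that the two composites are exactly the two ``straighten the tails'' self-maps: $\Phi\circ\Gamma$ replaces $M$ by its long-extension $\overline{M|_{[T_0,T_l]}}$ (handled by the push-to-infinity homotopy of Proposition~\ref{prop:zig-zag}, as you say), and $\Gamma\circ\Phi$ replaces $r$ by $\overline{r|_{(t_0,t_l)}}$.

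The genuine gap is exactly where you locate it: the $\E^{\perp}$-side straightening. You propose to justify it by the fact that an embedded arc with a free endpoint in a solid cylinder is unknotted, but that is a statement about individual arcs (a $\pi_0$-statement), whereas what is needed is a straightening that varies continuously in $r$ and coherently over the simplicial structure --- in effect a contraction of the whole space of tail arcs rel the collar. This is precisely the content of Lemma~\ref{lem:W_L_contractible}, i.e.\ the contractibility of the space $W$ of ropes strongly reducible at $0$, which the paper imports from Mostovoy \cite[Lemma~10]{Mostovoy02} and applies to $r|_{[t_l,\infty)}$ and (its mirror) to $r|_{(-\infty,t_0]}$ while fixing $r|_{[t_0,t_l]}$. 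That lemma is a nontrivial input, and the paper's proof of the proposition consists essentially of reducing to it; without it (or an equivalent uniform unknotting argument) your deformation retraction onto the core is asserted but not constructed, so the proof is incomplete at its only substantive step. A smaller point: the homotopy must keep the rope \emph{strongly} reducible at every $t_i$, not merely reducible; this does hold because the straightening fixes $r|_{[t_0,t_l]}$ and the collars (and Mostovoy's contraction preserves the collar at $0$), but it needs to be stated.
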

\begin{proof}
That the inclusion $\E^{\perp}\to\E$ induces a homotopy equivalence $B\E^{\perp}\xrightarrow{\simeq}B\E$ follows in the same way as \cite[Theorem~3.9]{GalatiusRandal-Williams10}, using \cite[Lemma~3.4]{GalatiusRandal-Williams10}.

Define a functor $\Phi\co\E^{\perp}\to\D^{\perp}$ that induces a simplicial map $\Phi\co N_*\E^{\perp}\to N_*\D^{\perp}$ by
\[
 \Phi(t;r):=(f(t);c(r))
\]
(see Figure~\ref{fig:PhiGamma}).
\begin{figure}
\begin{center}
\input{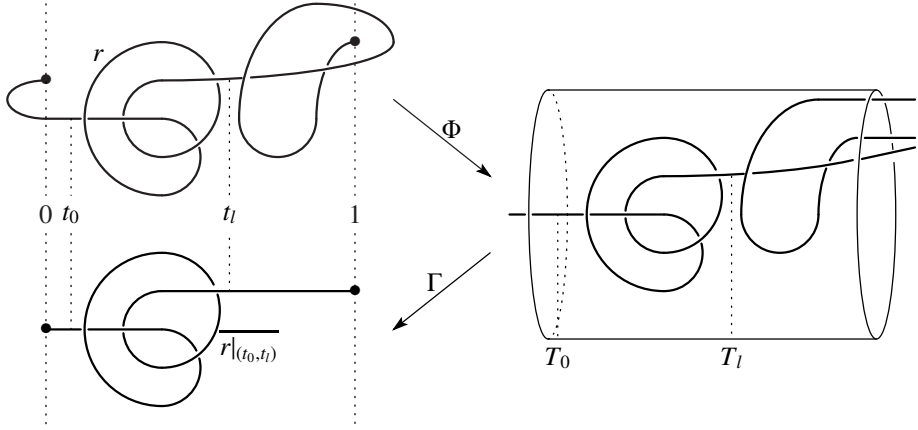}
\end{center}
\caption{The maps $\Phi$ and $\Gamma$}
\label{fig:PhiGamma}
\end{figure}
Define the map in the reverse direction $\Gamma\co N_l\D^{\perp}\to N_l\E^{\perp}$ by
\[
 \Gamma(T_0\le\dotsb\le T_l;M):=(t_0\le\dotsb\le t_l;(f^{-1}\times\id_{D^2})(\overline{M|_{[T_0,T_l]}})),
\]
where $\overline{M|_{[T_0,T_l]}}$ is the long-extension of $M|_{[T_0,T_l]}$ (see \eqref{eq:long-extension}),
and $t_i:=f^{-1}(T_i)\in(0,1)$ (see Figure~\ref{fig:PhiGamma}).
Notice that $(f^{-1}\times\id_{D^2})(M)$ is not necessarily a tame (or regular) submanifold of $(0,1)\times D^2$ for some $M\in\psi$
(for example, a manifold $M$ that is ``knotted'' outside arbitrary compact set of $\R^3$),
but $(f^{-1}\times\id_{\R^2})(\overline{M|_{[T_0,T_l]}})$ is indeed a tame submanifold in $(0,1)\times D^2$ since $\overline{M|_{[T_0,T_l]}}$ is a union of two straight half-lines outside $[T_0,T_l]\times D^2$.

We show that $\Phi$ is a levelwise homotopy equivalence, with a homotopy inverse $\Gamma$.
The composite $\Phi\circ\Gamma$ is given by
\[
 \Phi\circ\Gamma(T_0\le\dotsb\le T_l;M)=(T_0\le\dotsb\le T_l;\overline{M|_{[T_0,T_l]}})
\]
and a similar isotopy from the proof of Proposition~\ref{prop:zig-zag} proves that $\Phi\circ\Gamma\simeq\id$.

The other composite $\Gamma\circ\Phi$ is given by
\[
 \Gamma\circ\Phi(t_0\le\dotsb\le t_l;r):=(t_0\le\dotsb\le t_l;\overline{r|_{(t_0,t_l)}}),
\]
where
\[
 \overline{r|_{(t_0,t_l)}}:=
 \bigl([0,t_0]\times\{p_{23}(r|_{t_0})\}\bigr)\cup
 r|_{(t_0,t_l)}\cup
 \bigl([t_l,1]\times\{p_{23}(r|_{t_l})\}\bigr)\in R
\]
is the ``long-extension'' of $r|_{(t_0,t_l)}$.
The rope $\overline{r|_{(t_0,t_l)}}$ can be obtained from $r$ by ``unknotting'' the edge parts $r|_{(-\infty,t_0)}\sqcup r|_{(t_l,\infty)}$ (see Figure~\ref{fig:PhiGamma}).
This unknotting can be realized by applying Lemma~\ref{lem:W_L_contractible} and its analogue respectively  to $r_{[t_l,\infty)}$ and $r_{(-\infty,t_0]}$, keeping $r|_{[t_0,t_l]}$ unchanged (and hence keeping $r$ to be strongly reducible at each $t_i$).
Thus $\Gamma\circ\Phi\simeq\id$.
\end{proof}

\begin{lem}[Mostovoy {\cite[Lemma~10]{Mostovoy02}}]\label{lem:W_L_contractible}
Let $W$ be the subspace of $R$ consisting of $r$ that is ``strongly reducible'' at $0$, that means $r|_{(-\epsilon,\epsilon)}=r|_{[0,\epsilon)}=[0,\epsilon)\times\{p_{23}(\partial_0r)\}$ for some $\epsilon>0$.
Then $W$ is contractible.
In other words, there exists a canonical homotopy for any $r\in W$ that transforms $r$ to the trivial rope $[0,1]\times\{(0,0)\}$ keeping $r$ to be strongly reducible at $0$.
\end{lem}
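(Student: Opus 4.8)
The plan is to exhibit an explicit, canonical deformation of $W$ onto the single point $[0,1]\times\{(0,0)\}$, and it is convenient to split it into two stages. In the first stage I would straighten each $r\in W$ to the horizontal segment $[0,1]\times\{p_{23}(\partial_0r)\}$, keeping it strongly reducible at $0$ throughout; in the second stage I would slide the $D^2$--coordinate $p_{23}(\partial_0r)$ linearly to the origin, which is harmless since the segment stays horizontal (hence strongly reducible at $0$) and $D^2$ is convex. All the content is in the first stage, so I will concentrate on it.

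For the first stage the idea is to \emph{reel the rope in towards its anchored straight germ} at $0$, rather than to scale the knotting down. Writing $r$ by its arc-length parametrization (canonical modulo $\mathrm{Diff}^+([0,1])$) and choosing the germ length continuously, I would retract $r$ onto progressively shorter initial sub-arcs, letting the retained length decrease from the total length down to the germ length $\epsilon$. Since these are genuine sub-arcs of $r$ they vary continuously in the $C^\infty$--topology and shrink to the straight germ $[0,\epsilon)\times\{p_{23}(\partial_0r)\}$ without any curvature blow-up; the knotting is \emph{discarded} with the removed tail and not compressed into a point, which is exactly what avoids the discontinuity that would plague a naive shrink-to-a-point homotopy. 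To keep each intermediate stage an honest rope—that is, with its far endpoint on $\{1\}\times D^2$—I would reconnect the moving inner endpoint to the far wall by a canonical auxiliary arc, using the room available in the non-compact $\R$--direction of $\R\times D^2$ to route it clear of the retained sub-arc; as the retained length reaches $\epsilon$ this auxiliary arc becomes the straight continuation out to $x=1$, so the whole family terminates at the horizontal segment. This reeling can be packaged as an ambient isotopy of $\R\times D^2$ preserving the wall $\{1\}\times D^2$ and supported away from the germ, so that strong reducibility at $0$ is preserved at every moment; it is the rope analogue of the escape-to-infinity homotopy used in the last part of the proof of Proposition~\ref{prop:zig-zag}.

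The main obstacle is precisely the construction of the reconnecting auxiliary arc: it must be embedded, disjoint from the retained sub-arc, land on $\{1\}\times D^2$, vary smoothly with both the reeling parameter and with $r\in W$, and degenerate to the straight continuation in the limit. Equivalently, one must realize the reel-in as a genuine ambient isotopy through \emph{embedded} ropes, and here the openness of $D^2$ together with the non-compactness of $\R$ is what supplies the three-dimensional room to thread the reconnection past the retained arc and to carry the far endpoint back onto the wall (note that $\{1\}\times D^2$ is an interior slice of the boundaryless manifold $\R\times D^2$, so its far endpoint has full ambient room). The remaining points are routine: a continuous choice of germ length (the space of pairs $(r,\epsilon)$ with straight germ of length $\epsilon$ has contractible interval fibers over $W$, so such a choice exists), the continuity of the arc-length parametrization modulo $\mathrm{Diff}^+([0,1])$, and the smoothing of the junction between the retained sub-arc and the reconnecting arc. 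Once the reeling isotopy is in place, concatenating it with the convex slide of the second stage gives the asserted canonical contraction.
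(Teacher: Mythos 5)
Your proposal takes a genuinely different route from the paper, which does not construct the contraction directly: the paper only builds an explicit homotopy equivalence $W\simeq W'$ (recentering the endpoints by the fiberwise scaling $\Xi_r$ of \eqref{eq:Xi}) and then identifies $W'$ with Mostovoy's space $W_L^0$, importing the contractibility and the fact that his homotopy $D''_T$ preserves strong reducibility at $0$ from \cite[Lemma~10]{Mostovoy02}. Your second stage (sliding the horizontal segment to the origin by convexity of $D^2$) is fine and plays the role of $\Xi_r$. The problem is the first stage.

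The gap is exactly the step you flag as ``the main obstacle'' and then do not carry out: the continuous choice, jointly in $r\in W$ and the reeling parameter $s$, of an embedded reconnecting arc from the moving free endpoint $r(s)$ to the wall $\{1\}\times D^2$, disjoint from the retained sub-arc $r([0,s])$ and degenerating to the straight continuation. This is not a routine detail; it is the entire content of the lemma. For a single rope, producing the reeling-plus-reconnection as an ambient isotopy ending at the horizontal segment is precisely an unknotting of that rope (which exists by tameness), and making the choice continuous over all of $W$ is equivalent to the contractibility you are trying to prove, so the argument is circular as it stands. Note also that the ``knotting is discarded with the removed tail'' only in a weak sense: the reconnecting arc must avoid $r([0,s])$, which still contains all the knotting of the initial portion, and the free endpoint $r(s)$ may sit deep inside that tangle, so neither the openness of $D^2$ nor the non-compactness of the $\R$--direction helps you thread the arc \emph{out} of the tangle canonically. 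The one canonical candidate---a small normal push-off of the discarded tail $r([s,1])$---fails instructively: the resulting rope is isotopic to $r$ itself and the family never approaches the straight rope. To repair the proof along your lines you would have to either supply this family of arcs explicitly (essentially redoing Mostovoy's construction of $D''_T$) or, as the paper does, reduce to $W'$ and cite \cite[Lemma~10]{Mostovoy02}.
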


\begin{proof}
Let $W'\subset W$ be the subspace consisting of $r\in W$ with $\partial_ir=(i,0,0)$ for $i=0,1$.
We show that the inclusion $W'\hookrightarrow W$ is a homotopy equivalence.
This completes the proof since $W'$ is homeomorphic to the space $W^0_L$ from \cite[Lemma~10]{Mostovoy02} via the diffeomorphism $\R^1\times D^2\xrightarrow{\approx}\R^3=\R^1\times\R^2$ defined by $(x,u)\mapsto(x,\tan(\pi\abs{u}/2)\cdot u)$, and $W^0_L$ has been shown to be contractible.
In the proof of \cite[Lemma~10]{Mostovoy02} the contracting homotopy (denoted by $D''_T$) keeps ropes to be strongly reducible at $0$.

A homotopy inverse $W\to W'$ can be realized as follows.
For $p\in\R^2$ let $\xi_p\co\R^2\to\R^2$ be the scaling by $1/2$ centered at $p$, namely $\xi_p(x):=(x+p)/2$.
Notice that if $p\in D^2$ then $\xi_p(D^2)\subset D^2$.
Let $b\co\R^1\to\R^1$ be a monotonically increasing $C^{\infty}$-function satisfying $b(x)=0$ for $x<1/3$ and $b(x)=1$ for $x>2/3$.
For $r\in W$, define $\Xi_r\co\R^1\times D^2\to\R^1\times D^2$ by
\begin{equation}\label{eq:Xi}
 \Xi_r(x,(y,z)):=(x,\xi_{-(1-b(x))p_{23}(\partial_0r)-b(x)p_{23}(\partial_1r)}(y,z)).
\end{equation}
Then $\Xi_r(r)\subset\R^1\times D^2$ and
$\Xi_r(\partial_ir)=(i,\xi_{-p_{23}(\partial_ir)}(p_{23}(\partial_ir)))=(i,0,0)$.
Moreover
$\Xi_r(r)$ is strongly reducible at $0$ because for a small $0<\epsilon<1/3$ such that $r|_{(-\epsilon,\epsilon)}=[0,\epsilon)\times\{p_{23}(\partial_0r)\}$ we have
\[
 \Xi_r(r)|_{(-\epsilon,\epsilon)}
 =\Xi_r([0,\epsilon)\times\{p_{23}(\partial_0r)\})
 =[0,\epsilon)\times\{\xi_{-p_{23}(\partial_0r)}(p_{23}(\partial_0r))\}
 =[0,\epsilon)\times\{(0,0)\}.
\]
Thus we have a continuous map $\Xi_{\bullet}\co W\to W'$.
The composite $W'\hookrightarrow W\xrightarrow{\Xi_{\bullet}}W'$ is the scaling by $1/2$ in the $(y,z)$-direction and is homotopic to $\id_{W'}$.
The other composite $W\xrightarrow{\Xi_{\bullet}}W'\hookrightarrow W$ is also homotopic to $\id_{W}$ because $\xi_p$ is homotopic to $\id_{D^2}$ for any $p\in D^2$.
\end{proof}

\begin{thm}\label{thm:BE=R}
The forgetful map induces a weak equivalence $v\co B\E\to R$.
\end{thm}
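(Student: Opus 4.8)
The plan is to follow the proof of Theorem~\ref{thm:BK=psi_s} verbatim, with $R$ and $\E$ playing the roles of $\psi$ and $\D$. Writing an element of $B\E$ represented by $\bigl((t_0\le\dotsb\le t_l;r),(\lambda_0,\dotsc,\lambda_l)\bigr)\in N_l\E\times\Delta^l$ as a formal sum $\sum_i\lambda_it_i$ (compatibly with \eqref{eq:relation}), the forgetful map is $v(\sum_i\lambda_it_i):=r$. It suffices to show that $v$ induces an isomorphism on homotopy groups in every dimension, equivalently that $\pi_m(R',B\E)=0$ for all $m$, where $R'$ is the mapping cylinder of $v$. As before this reduces to a lifting problem: given a commutative square of strict arrows with $\hat{f}\co\partial\overline{D}{}^m\to B\E$ and $f\co\overline{D}{}^m\to R$ satisfying $v\circ\hat{f}=f|_{\partial\overline{D}{}^m}$, I would produce a lift $g\co\overline{D}{}^m\to B\E$ of $f$ that extends $\hat{f}$.

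For the interior lift I would exploit the defining property of $R$: every rope $r\in R$ is reducible at some $t\in(0,1)$. Setting $U_a:=\{x\in\overline{D}{}^m\mid f(x)\text{ is reducible at }a\}$ for $a\in(0,1)$, each $U_a$ is open, since reducibility is a transversality (hence open) condition, and $\{U_a\}_{a\in(0,1)}$ covers $\overline{D}{}^m$ by the very definition of $R$. By compactness I can choose finitely many $a_0<\dotsb<a_k$ in $(0,1)$ with $\{U_{a_i}\}$ still covering, pick a subordinate partition of unity $\{\lambda_i\}$, and define $\hat{g}(x):=\sum_i\lambda_i(x)a_i\in B\E$, which lifts $f$ because each $f(x)$ is reducible at every $a_i$ with $\lambda_i(x)\ne 0$.

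It remains to reconcile $\hat{g}|_{\partial\overline{D}{}^m}$ with the prescribed boundary map $\hat{f}$. Since both are lifts of $f|_{\partial\overline{D}{}^m}$, their underlying ropes agree, so I may write $\hat{f}(x)=\sum_j\mu_j(x)b_j$ for reduction points $b_0<\dotsb<b_l$ in $(0,1)$. Re-ordering $\{a_i\}\cup\{b_j\}$ into a single increasing sequence $c_0<\dotsb<c_n$ and invoking the relation \eqref{eq:relation}, I would rewrite both boundary maps as formal sums over the common points $c_i$ and then linearly interpolate along the fibers of $v$ via $h(s,x):=s\,\hat{g}|_{\partial\overline{D}{}^m}(x)+(1-s)\hat{f}(x)$, which stays a lift of $f|_{\partial\overline{D}{}^m}$ for every $s$. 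Gluing $\hat{g}$ on the inner half-disk to $h$ on the collar (exactly as in Theorem~\ref{thm:BK=psi_s}) yields the desired $g$.

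The argument is structurally identical to Theorem~\ref{thm:BK=psi_s}, so I do not anticipate a substantive new obstacle beyond transcribing that proof with $(0,1)$ in place of $\R^1$. The two points meriting explicit verification are that the sets $U_a$ are open and cover $\overline{D}{}^m$, both immediate since $R$ consists precisely of ropes reducible somewhere in $(0,1)$, and that the fibers of $v$ carry the affine structure used for the interpolation, which is exactly the formal-sum description of points of $B\E$. The most delicate bookkeeping, as before, is the common re-indexing of $\{a_i\}$ and $\{b_j\}$ into $\{c_i\}$ and the verification that the interpolation $h$ remains a lift throughout; everything else is routine.
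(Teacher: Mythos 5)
Your proposal is correct and is precisely the paper's argument: the paper's proof consists of the single instruction to replace $\D$ with $\E$ and take $a$ from $(0,1)$ in the proof of Theorem~\ref{thm:BK=psi_s}, which is exactly the transcription you carry out. The two verification points you flag (openness and covering of the $U_a$, and the affine structure on fibers) are indeed the only places where the substitution needs checking, and both go through as you say.
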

\begin{proof}
Replace $\D$ with $\E$ and take $a$ from $(0,1)$ in the proof of Theorem~\ref{thm:BK=psi_s}.
\end{proof}

\begin{cor}\label{cor:BK=R}
There exists a commutative diagram consisting of (weak) equivalences
\[
 \xymatrix{
  R \ar[r]^-{c}_-{\sim} & \psi & \\
  B\E^{\perp}\ar[u]^-{v'}_-{\sim}\ar[r]^-{\Phi}_-{\simeq} & B\D^{\perp}\ar[u]^-{u'}_-{\sim}\ar[r]^-{F}_-{\sim} & B\K
 }
\]
where $u',v'$ are the composites of $u,v$ with the inclusions.
\end{cor}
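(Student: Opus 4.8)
The plan is to assemble the corollary from the equivalences already established and then to deduce that the cut-off map $c$ is a weak equivalence from the strict commutativity of the left-hand square. First I would record that every map in the diagram other than $c$ is already known to be a (weak) equivalence. The map $F\colon B\D^{\perp}\to B\K$ is a homotopy equivalence by Proposition~\ref{prop:zig-zag}, and $\Phi\colon B\E^{\perp}\to B\D^{\perp}$ is a homotopy equivalence by Proposition~\ref{prop:zig_zag2}. The vertical map $u'$ is by definition the composite $B\D^{\perp}\hookrightarrow B\D\xrightarrow{u}\psi$; the inclusion is a homotopy equivalence (Proposition~\ref{prop:zig-zag}) and $u$ is a weak equivalence (Theorem~\ref{thm:BK=psi_s}), so $u'$ is a weak equivalence. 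Likewise $v'$ is the composite $B\E^{\perp}\hookrightarrow B\E\xrightarrow{v}R$, the inclusion being a homotopy equivalence (Proposition~\ref{prop:zig_zag2}) and $v$ a weak equivalence (Theorem~\ref{thm:BE=R}), so $v'$ is a weak equivalence.

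Next I would verify that the left-hand square commutes on the nose. A point of $B\E^{\perp}$ is represented by $\bigl((t_0\le\dotsb\le t_l;r),(\lambda_0,\dotsc,\lambda_l)\bigr)$, which in the formal-sum notation is $\sum_i\lambda_i t_i$ with underlying rope $r$. Following it along $v'$ forgets the simplicial data and yields $r\in R$, after which $c$ produces $c(r)=(f\times\id_{D^2})(r|_{(0,1)})\in\psi$. Following the same point along $\Phi$ yields $\sum_i\lambda_i f(t_i)$ with underlying manifold $c(r)$ (this is precisely the content of the definition $\Phi(t;r)=(f(t);c(r))$, extended simplicially using that $f$ is increasing), and $u'$ then forgets the simplicial data to return the underlying manifold $c(r)\in\psi$. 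Both composites equal $c(r)$, so $c\circ v'=u'\circ\Phi$.

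Finally I would conclude. Since the square commutes and $v'$, $u'$, $\Phi$ all induce isomorphisms on homotopy groups, the identity $c_*\circ v'_*=u'_*\circ\Phi_*$ forces $c_*=u'_*\circ\Phi_*\circ(v'_*)^{-1}$ to be an isomorphism in every degree and on $\pi_0$; hence $c$ is a weak equivalence. This supplies the remaining arrow of the diagram and exhibits $R$, $\psi$ and $B\K$ as mutually weakly equivalent.

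I expect the argument to be almost entirely formal once the earlier propositions are in hand, so there is no serious obstacle; the single point demanding care is the identification in the middle step, namely that $u'$ applied after $\Phi$ returns exactly the cut-off $c(r)$ rather than some reparametrized or re-extended manifold. This rests on the observation that $\Phi$ alters a simplex of $N_*\E^{\perp}$ only by relabelling the reduction parameters via $f$ and by applying the cut-off $c$ to the underlying rope, so that the manifold recorded by the forgetful map $u'$ is literally $c(r)$. Checking this compatibility of the two forgetful maps with the cut-off is the one substantive verification, and it is immediate from the definitions of $c$, $u$, $v$ and $\Phi$.
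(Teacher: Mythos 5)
Your proposal is correct and is exactly the argument the paper intends for this corollary: all arrows other than $c$ are weak equivalences by Propositions~\ref{prop:zig-zag} and \ref{prop:zig_zag2} and Theorems~\ref{thm:BK=psi_s} and \ref{thm:BE=R}, the left square commutes on the nose because $\Phi(t;r)=(f(t);c(r))$ and both forgetful maps simply return the underlying submanifold, and then $c$ is a weak equivalence by two-out-of-three on homotopy groups. No divergence from the paper's route.
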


\subsection{Reducible ropes and Mostovoy's parametrized short ropes}
In Corollary~\ref{cor:BK=R} we have seen that $B\K$ is weakly equivalent to $R$.
The following Theorem solves affirmatively the conjecture of Mostovoy.
For a rope $r$ let $l(r)$ denote the length of $r$.

\begin{thm}\label{thm:ropes_equiv}
Let $B_2$ be the space of embeddings $\rho\co[0,1]\hookrightarrow\R^3$ satisfying $\rho(i)=(i,0,0)$ for $i=0,1$ and $l(\rho([0,1])<3$ (Mostovoy's (parametrized) \emph{short ropes} \cite{Mostovoy02}).
Then $B_2$ is weakly equivalent to $R$.
\end{thm}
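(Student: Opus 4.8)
The plan is to view $R$ and $B_2$ as two presentations of the same reducible--rope data, differing only in that $B_2$ enforces reducibility indirectly through the bound $l(\rho)<3$.

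First I would produce the comparison map. Using the $\R^1$--coordinate preserving diffeomorphism $h\co\R^1\times D^2\xrightarrow{\approx}\R^3$, $(x,u)\mapsto(x,\tan(\pi\abs{u}/2)u)$ from the proof of Lemma~\ref{lem:W_L_contractible}, the slices $\{t\}\times D^2$ correspond to the planes $\{t\}\times\R^2$, so reducibility is preserved by $h$. The one genuinely new ingredient is Mostovoy's length estimate: for $\rho\in B_2$ let $x(s)$ be the first coordinate and $N(t):=\#\{s\co x(s)=t\}$; the Banach indicatrix identity gives $\int_{\R}N(t)\,dt=\mathrm{Var}(x)\le l(\rho)<3$, while for regular $t\in(0,1)$ the integer $N(t)$ is odd and positive since $x(0)=0<t<1=x(1)$. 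If $N(t)\ge 3$ held for almost every $t\in(0,1)$ then $\int_0^1N\,dt\ge 3$, a contradiction; hence $N\equiv 1$ on a set of positive measure, and at such a regular value $\rho$ is reducible. Composing with $h^{-1}$ and forgetting the parametrization yields a continuous map $\alpha\co B_2\to R$.

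Next I would remove the two inessential discrepancies. The parametrization is harmless because $\mathrm{Diff}^+([0,1])$ is contractible and acts freely, so $B_2\to B_2/\mathrm{Diff}^+([0,1])=:\overline{B_2}$ is a weak equivalence. The endpoint positions are harmless because, transporting through $h$, evaluation of the two endpoints presents $R$ as the total space of a fibration over the contractible space $\R^2\times\R^2$; the inclusion of the fibre $R_0$ of ropes with centred endpoints $(i,0,0)$ is therefore a weak equivalence. This reduces the theorem to showing that the inclusion $\overline{B_2}\hookrightarrow R_0$ of short ropes into reducible ropes, both with centred endpoints, is a weak equivalence. For the core I would reuse the machinery already built: by the argument of Theorem~\ref{thm:BK=psi_s}, applied verbatim with the open cover $U_a=\{x\co f(x)\text{ reducible at }a\}$ (a cover precisely because every short rope is reducible), the forgetful map from the classifying space of the poset $\E_2$ of pairs (level, short rope) is a weak equivalence $B\E_2\xrightarrow{\simeq}\overline{B_2}$, while $B\E\xrightarrow{\simeq}R_0$ holds by Theorem~\ref{thm:BE=R} and the endpoint reduction. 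These fit into a homotopy--commutative square with $\alpha$ and the inclusion $\E_2\hookrightarrow\E$, so it suffices to prove that $\E_2\hookrightarrow\E$ induces a weak equivalence on classifying spaces. Following the pattern of $\Phi,\Gamma$ in Proposition~\ref{prop:zig_zag2}, I would build a homotopy inverse that, given a reducible rope marked with levels $t_0\le\dotsb\le t_l$, straightens the two edge parts $r|_{(-\infty,t_0]}$ and $r|_{[t_l,\infty)}$ by Lemma~\ref{lem:W_L_contractible} and its mirror, and shortens the middle $r|_{[t_0,t_l]}$, producing a rope of length $<3$ still reducible at each $t_i$.

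The hard part will be exactly this last shortening, together with keeping it compatible with the poset structure. Lemma~\ref{lem:W_L_contractible} disposes of the edges but leaves the middle band uncontrolled, and a naive straightening of the whole rope would contract $R_0$ to a point, contradicting $R_0\simeq B\K$. The shortening must therefore be performed \emph{fibrewise} over the contractible space of marked reducible levels rather than globally, and one must show that after confining each rope to a thin neighbourhood of a centred polygonal arc through the points $p_{23}(r|_{t_i})$ the total length can be driven below the threshold $3$ while the single--point transversality at every $t_i$ is preserved. Making this simultaneous control of length and of reducibility precise, so that the inverse genuinely lands in $\E_2$ and the homotopies $\Gamma\circ\Phi\simeq\id$, $\Phi\circ\Gamma\simeq\id$ stay within short ropes, is where the real content of the theorem lies.
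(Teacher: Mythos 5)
Your architecture tracks the paper's almost exactly: kill the parametrization using contractibility of $\mathrm{Diff}^+([0,1])$, normalize the endpoints, compare both spaces to classifying spaces of level-posets via the forgetful-map argument of Theorem~\ref{thm:BK=psi_s}, and reduce everything to showing that the inclusion of the subposet of \emph{short} reducible ropes into the poset of all reducible ropes induces an equivalence on classifying spaces (your $B\E_2\to B\E$ is the paper's $B\E^{\rm s}(\tau)\to B\E(\tau)$). Your Banach-indicatrix argument for why $l(\rho)<3$ forces reducibility is correct and is exactly the content the paper delegates to Mostovoy. But you stop at the one step that carries the real content, and you say so yourself: you do not construct the length-reducing homotopy inverse. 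That is a genuine gap, not a deferred detail.

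Moreover, the idea you sketch for that step --- confining each rope to a thin neighbourhood of a polygonal arc through the points $p_{23}(r|_{t_i})$ --- points in the wrong direction. Shortening requires no geometric simplification of the rope: a rescaled copy of an arbitrarily knotted arc has arbitrarily small length. The paper's inverse first unknots the two edge parts (as you propose, via Lemma~\ref{lem:W_L_contractible} and its mirror) to obtain $\overline{r|_{(t_0,t_l)}}$, of length $L:=l(\overline{r|_{(t_0,t_l)}})\ge 1$, then applies the global similarity $\theta_{t,r}(x)=x/L$ and appends the straight segment $[1/L,1]\times\{p_{23}(r|_{t_l})/L\}$ to bring the right endpoint back to the plane $x=1$. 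The result has length $1+(1-1/L)<2<3$, stays inside $\R^1\times D^2(\tau)$ because $L\ge 1$, and remains (strongly) reducible at the rescaled levels $t_i/L$, since a similarity carries transverse one-point slice intersections to transverse one-point slice intersections. Your worry that a naive straightening would contract $R_0$ to a point is answered by observing that rescaling is not a straightening: it changes the metric but not the isotopy class, and the marked middle part retains all of its knotting. A secondary issue: routing the comparison through the diffeomorphism $h\co\R^1\times D^2\to\R^3$ makes the length condition awkward, since $h$ does not preserve length; the paper sidesteps this by noting that every short rope already lies in $\R^1\times D^2(2\sqrt{2})\subset\R^3$ with the standard metric, so no conjugation is needed in the main diagram.
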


The rest of this paper is devoted to the proof of Theorem~\ref{thm:ropes_equiv}.

It is not difficult to see that the image of any $\rho\in B_2$ is in $\R^1\times D^2(2\sqrt{2})$,
where $D^2(\tau)$ is the open $2$-disk centered at the origin and of radius $\tau$.
Thus we may write $B_2$ as
\[
 B_2=\{\rho\co[0,1]\hookrightarrow\R^1\times D^2(2\sqrt{2})\mid\rho(i)=(i,0,0)\text{ for }i=0,1\text{ and }l(\rho([0,1]))<3\}.
\]

Let $B_2^{\rm u}:=B_2\mathord{/}\mathrm{Diff}^+([0,1])$ (``u'' indicates ``unparametrized''), namely $B_2^{\rm u}$ is the space of ropes in $\R^1\times D^2(2\sqrt{2})$ with $\partial r=\{\partial_0r,\partial_1r\}$, $\partial_ir=(i,0,0)$ and $l(r)<3$.
The following holds since $\mathrm{Diff}^+([0,1])$ is contractible.

\begin{lem}\label{lem:B->B^2}
$B_2\to B_2^{\rm u}$ is a homotopy equivalence.
\end{lem}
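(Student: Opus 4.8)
The plan is to realize the quotient map $q\co B_2\to B_2^{\rm u}$ as a map admitting an explicit homotopy inverse, using only that $G:=\mathrm{Diff}^+([0,1])$ is contractible. The group $G$ acts on $B_2$ on the right by reparametrization, $\rho\cdot\phi:=\rho\circ\phi$, with $B_2^{\rm u}=B_2/G$. This action is free, since every $\rho\in B_2$ is injective and hence $\rho\circ\phi=\rho$ forces $\phi=\id$; thus each fibre of $q$ is a single $G$-orbit, homeomorphic to $G$. First I would produce a continuous section $s\co B_2^{\rm u}\to B_2$ of $q$, and then use the contraction of $G$ to homotope $s\circ q$ to $\id_{B_2}$.

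For the section I would use the constant-speed parametrization. Given a rope $r$ (an element of $B_2^{\rm u}$ remembers its initial point $\partial_0r=(0,0,0)$, since we have only quotiented by \emph{orientation-preserving} diffeomorphisms), let $s(r)\in B_2$ be the constant-speed parametrization of $r$ starting at $\partial_0r$. Because a rope is a \emph{smooth embedded} arc, its speed never vanishes, so the arc-length function $\ell_\rho(t):=\int_0^t\abs{\rho'(u)}\,du$ is an orientation-preserving diffeomorphism $[0,1]\xrightarrow{\cong}[0,l(r)]$; rescaling its inverse produces $s(r)$, again a smooth embedding of length $l(r)<3$, so $s(r)\in B_2$. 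Since the constant-speed parametrization is intrinsic to the unparametrized rope, $s$ is well defined on $B_2^{\rm u}$ and satisfies $q\circ s=\id_{B_2^{\rm u}}$.

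It then remains to homotope $s\circ q$ to $\id_{B_2}$. For each $\rho\in B_2$ there is a unique $\phi_\rho\in G$ with $\rho=s(q(\rho))\circ\phi_\rho$, namely $\phi_\rho=s(q(\rho))^{-1}\circ\rho$, and $\rho\mapsto\phi_\rho$ is continuous. I would contract $G$ by the straight-line homotopy $\phi^\tau:=(1-\tau)\phi+\tau\,\id$, which stays in $G$ because its derivative $(1-\tau)\phi'+\tau$ is everywhere positive and it fixes $0$ and $1$. Setting
\[
 H(\tau,\rho):=s(q(\rho))\circ\bigl((1-\tau)\phi_\rho+\tau\,\id\bigr)
\]
gives a map $H\co[0,1]\times B_2\to B_2$ (each $H(\tau,\rho)$ is merely a reparametrization of $\rho$, hence an embedding with the same image and length, so it remains in $B_2$) with $H(0,-)=\id_{B_2}$ and $H(1,-)=s\circ q$. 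Therefore $s$ and $q$ are mutually inverse homotopy equivalences.

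The hard part will not be the homotopy-theoretic bookkeeping, which is routine once the maps are known to be continuous, but rather the continuity itself: one must check that the section $s$ and the reparametrizing diffeomorphism $\rho\mapsto\phi_\rho$ depend continuously (indeed smoothly) on $\rho$ in the $C^{\infty}$-topology. This holds because for smooth embeddings the speed $\abs{\rho'}$ is locally uniformly bounded away from $0$, so $\ell_\rho$ and its inverse vary smoothly with $\rho$; this is the one point I would verify with care rather than merely assert.
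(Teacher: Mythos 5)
Your proof is correct and is precisely the standard argument the paper is alluding to: the paper's entire justification is the one-line remark that $\mathrm{Diff}^+([0,1])$ is contractible, and you have filled in the details by exhibiting the arc-length section and the fibrewise linear contraction. The one point you flag as needing care --- continuity of the constant-speed reparametrization in the $C^\infty$-topology --- is indeed the only nontrivial verification, and it holds for the reason you give.
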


We notice that $l(r)<3$ implies that $r$ is a reducible rope, and hence we may regard $B_2^{\rm u}$ as a subspace of $R(2\sqrt{2})$, where $R(\tau)$ is the space of reducible ropes in $\R^1\times D^2(\tau)$.

Let $R^{\rm s}(\tau)\subset R(\tau)$ be the subspace consisting of $r\in R(\tau)$ with $l(r)<3$ (``s'' indicates ``short'').
By definition $B_2^{\rm u}\subset R^{\rm s}(2\sqrt{2})$.

\begin{lem}\label{lem:B^u->R^2}
The inclusion $B_2^{\rm u}\hookrightarrow R^{\rm s}(2\sqrt{2})$ is a homotopy equivalence.
\end{lem}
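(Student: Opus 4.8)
The plan is to build an explicit homotopy $H\co R^{\rm s}(2\sqrt2)\times[0,1]\to R^{\rm s}(2\sqrt2)$ with $H_0=\id$, $H_1(R^{\rm s}(2\sqrt2))\subset B_2^{\rm u}$ and $H_t(B_2^{\rm u})\subset B_2^{\rm u}$ for every $t$; such an $H$ exhibits the inclusion as a homotopy equivalence with homotopy inverse $H_1$. The only difference between the two spaces is that a rope in $B_2^{\rm u}$ has its endpoints pinned at the centres $(i,0,0)$, whereas a rope in $R^{\rm s}(2\sqrt2)$ may have $\partial_i r$ anywhere in $\{i\}\times D^2(2\sqrt2)$. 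Since $l(r)<3$ already forces reducibility, the conditions that $H$ must preserve are only embeddedness, containment in $\R^1\times D^2(2\sqrt2)$ and shortness $l<3$; the task of $H$ is to drag $\partial_0 r,\partial_1 r$ to the centres while respecting these.

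The basic building block is the fibrewise scaling $s_\delta(x,w):=(x,\delta w)$, $\delta\in(0,1]$. Each $s_\delta$ is a diffeomorphism, carries ropes to ropes inside the disc, and is length non-increasing because $|(s_\delta\gamma)'|=\sqrt{\dot x^2+\delta^2|\dot w|^2}\le|\dot\gamma|$; moreover it pulls the endpoints radially inwards, $s_\delta(\partial_i r)=(i,\delta\,p_{23}(\partial_i r))$. Thus letting $\delta$ decrease from $1$ does almost everything we want, except that the endpoints reach the centres only in the degenerate limit $\delta=0$. I would therefore shrink only as far as a value $\delta(r)\in(0,1/2)$ depending continuously on $r$ (fixed below), after which the rope lies in the thin tube $|w|<2\sqrt2\,\delta(r)$ with both endpoints within distance $<2\sqrt2\,\delta(r)$ of the centres.

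The second stage pushes these nearly central endpoints exactly onto the centres by a shear localised near the two ends. Fixing bump functions $\psi_0,\psi_1$ with disjoint supports and $\psi_0(0)=\psi_1(1)=1$, I would apply, as a second homotopy, the family of diffeomorphisms
\[
 (x,w)\longmapsto\bigl(x,\;w-s\,\delta(r)\,p_{23}(\partial_0 r)\,\psi_0(x)-s\,\delta(r)\,p_{23}(\partial_1 r)\,\psi_1(x)\bigr),\qquad s\in[0,1],
\]
to the shrunken rope. At $s=1$ this sends $\partial_i r$ to $(i,0,0)$, so the end result lies in $B_2^{\rm u}$. Since the tube has radius $<2\sqrt2\,\delta(r)$ and each endpoint is displaced by $<2\sqrt2\,\delta(r)$, the sheared rope remains inside $\R^1\times D^2(2\sqrt2)$ as soon as $\delta(r)<1/2$ (the supports being disjoint, at most one shear term acts at each $x$). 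On $B_2^{\rm u}$ the shear vectors $p_{23}(\partial_i r)$ vanish, so both stages preserve $B_2^{\rm u}$.

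The main obstacle is to keep $l<3$ throughout, and this is exactly where the strict bound $l(r)<3$ and the radius $2\sqrt2$ are used. The first stage only decreases length, so the rope has length $\le l(r)<3$ when the shear begins; the extra length produced by the shear is bounded by $\int\delta(r)\,\lvert p_{23}(\partial_i r)\rvert\,\lVert\psi_i'\rVert_\infty\,\lvert\dot x\rvert\,ds\le C\,\delta(r)$ for an absolute constant $C$, using $\lvert p_{23}(\partial_i r)\rvert<2\sqrt2$ and $\int\lvert\dot x\rvert\,ds\le l(r)<3$. Hence setting $\delta(r):=\min\{1/4,(3-l(r))/(C+1)\}$, which is continuous and strictly positive on $R^{\rm s}(2\sqrt2)$, keeps the total length below $3$ at every instant. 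The remaining verifications are routine: that $\delta(r)$ and the two stages assemble into a single homotopy depending continuously on $r$, and that the endpoints stay on the slices $\{i\}\times D^2(2\sqrt2)$ at all times.
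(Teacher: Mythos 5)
Your proof is correct, and it is close in spirit to the paper's: both arguments move a short rope into $B_2^{\rm u}$ by fibrewise affine self-maps of the cylinder that contract toward the axis while translating the boundary slices so that the endpoints land on $(i,0,0)$. The paper does this in a single step, applying in each slice $\{x\}\times D^2(2\sqrt{2})$ the scaling $\xi_p$ by $1/2$ centered at a point interpolating (via a bump function $b(x)$) between $-p_{23}(\partial_0 r)$ and $-p_{23}(\partial_1 r)$, and takes the resulting map $\Xi_{\bullet}$ as a homotopy inverse to the inclusion, disposing of the length condition by observing that $\Xi_r$ shrinks in the $(y,z)$-direction. You instead build a deformation retraction in two stages, first scaling the fibres by a rope-dependent factor $\delta(r)$ and only then shearing the ends onto the axis, paying for the shear out of the slack $3-l(r)$ via $\delta(r)=\min\{1/4,(3-l(r))/(C+1)\}$. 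The place where your version earns its keep is precisely the length estimate: a fibrewise map of the form $(x,w)\mapsto(x,(w+c(x))/2)$ is not literally length non-increasing, since the $x$-derivative acquires the shear term $c'(x)\dot x/2$, so some quantitative control of the translation part is needed on either route; your explicit budget $C\,\delta(r)<3-l(r)$ supplies it cleanly, at the cost of a two-stage homotopy and a continuity check for $\delta(r)$ (which holds because $l$ is continuous on $R^{\rm s}(2\sqrt{2})$). Two small points to tidy: the integral bounding the shear's contribution to the length should be taken over the curve parameter rather than over $s$, and you should record that $0\le\psi_i\le 1$ so that the containment estimate $4\sqrt{2}\,\delta(r)<2\sqrt{2}$ is valid.
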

\begin{proof}
For $r\in R^{\rm s}(2\sqrt{2})$, let $\Xi_r\co\R^1\times D^2(2\sqrt{2})\to\R^1\times D^2(2\sqrt{2})$ be the map defined in \eqref{eq:Xi} (notice that if $p\in D^2(\tau)$ then $\xi_p(D^2(\tau))\subset D^2(\tau)$).
Then $l(\Xi_r(r))<3$ because $\Xi_r$ is a shrinking map in the $(y,z)$-direction and hence does not increase the length, and $\Xi_r(\partial_ir)=(i,\xi_{-\partial_ir}(\partial_ir))=(i,0,0)$.
Thus we have a continuous map $\Xi_{\bullet}\co R^{\rm s}(2\sqrt{2})\to B_2^{\rm u}$.
The composite $B_2^{\rm u}\hookrightarrow R^{\rm s}(2\sqrt{2})\xrightarrow{\Xi_{\bullet}}B_2^{\rm u}$ is the scaling by $1/2$ in the $(y,z)$-direction and is homotopic to $\id_{B_2^{\rm u}}$.
The other composite $R^{\rm s}(2\sqrt{2})\xrightarrow{\Xi_{\bullet}}B_2^{\rm u}\hookrightarrow R^{\rm u}(2\sqrt{2})$ is also homotopic to $\id_{R^{\rm s}(2\sqrt{2})}$ because $\xi_p$ is homotopic to $\id_{D^2(\tau)}$ for any $p\in D^2(\tau)$.
\end{proof}

Next let $\E(\tau)$ be the poset consisting of $(t,r)$, where $t\in(0,1)$ and $r\in R(\tau)$ such that $r$ is reducible at $t$.
The partial order is defined in the same way as in Definition~\ref{def:E}.
Define $\E^{\rm s}(\tau)$ be a subposet of $\E(\tau)$ consisting of $(t,r)$ with $l(r)<3$.
Then we have a commutative diagram
\begin{equation}\label{eq:diagram}
\begin{split}
 \xymatrix{
  B\E^{\rm s}(2\sqrt{2})\ar[rr]^-v_-{\sim}\ar[d] & & R^{\rm s}(2\sqrt{2})\ar@{^(->}[d] & & B_2\ar[ll]_-{\simeq}^-{\text{Lemmas~\ref{lem:B->B^2}, \ref{lem:B^u->R^2}}} \\
  B\E(2\sqrt{2})\ar[rr]^-{\simeq}_-{\text{Theorem~\ref{thm:BE=R}}} & & R(2\sqrt{2})\ar[rr]^-{\approx} & & R
 }
\end{split}
\end{equation}
where $B\E^{\rm s}(2\sqrt{2})\to B\E(2\sqrt{2})$ and $v$ are induced respectively by the inclusion and the forgetful map (see Theorem~\ref{thm:BK=psi_s}).
That $v$ is a weak equivalence follows from the same argument as in the proof of Theorem~\ref{thm:BE=R}.
The homeomorphism $R=R(1)\xrightarrow{\approx}R(\tau)$ is given by $r\mapsto(\id_{\R^1}\times\overline{\tau})(r)$,
where $\overline{\tau}:D^2\xrightarrow{\approx}D^2(\tau)$ is the scalar multiplication by $\tau$.
The diagram \eqref{eq:diagram} together with the following Lemma completes the proof of Theorem~\ref{thm:ropes_equiv}.

\begin{lem}
$B\E^{\rm s}(\tau)\to B\E(\tau)$ is a homotopy equivalence.
\end{lem}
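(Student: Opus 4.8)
The plan is to reduce the statement to a single geometric deformation and then feed it into the machinery already used in the paper. In the commutative square on the left of \eqref{eq:diagram} both forgetful maps $v$ are weak equivalences (Theorem~\ref{thm:BE=R}, and the same argument for the ``short'' version), so $B\E^{\rm s}(\tau)\to B\E(\tau)$ is a weak equivalence as soon as the inclusion $R^{\rm s}(\tau)\hookrightarrow R(\tau)$ is one; to obtain the stronger \emph{homotopy} equivalence of classifying spaces that is stated, I would instead run the construction chainwise and invoke Lemma~\ref{lem:levelwise_equivalence}, as explained below. The only tool I would use on the geometric side is the transverse scaling $\sigma_s\co(x,u)\mapsto(x,su)$ for $0<s\le 1$: it maps $\R^1\times D^2(\tau)$ into itself, fixes every slice $\{t\}\times D^2(\tau)$ and preserves transversality there, hence carries reducible ropes to reducible ropes and keeps $R^{\rm s}(\tau)$ inside itself (it is length non--increasing); and as $s\to 0$ the length of $\sigma_s(r)$ decreases to the total variation $\int_r\abs{dx}$ of the $\R^1$--coordinate along $r$.

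The difficulty is precisely that this total variation can exceed $3$, so transverse scaling alone never forces $l<3$; I would therefore precede it by a longitudinal confinement. Given $(t_0\le\dots\le t_l;r)$, the rope is a single transverse strand in a small collar of each $t_i$, and on each complementary slab I would compress the $\R^1$--coordinate so that all of the wiggling (hence all of the knotting) of $r$ in that slab is squeezed into an arbitrarily thin window, while the collars of the $t_i$ and the two ends $\partial_0 r,\partial_1 r$ are left untouched. This is a Mostovoy--type ``shrink into a thin packet'' isotopy; I would realise it by adapting the canonical contracting homotopy of Lemma~\ref{lem:W_L_contractible} and its right--hand analogue, applied slabwise rather than to the whole rope, so that $r$ stays embedded, stays inside $D^2(\tau)$, and remains (strongly) reducible at every $t_i$ throughout. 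After this step $\int_r\abs{dx}$ is close to the net progress $1$ from $x=0$ to $x=1$, so a subsequent $\sigma_s$ with $s$ small yields $l<3$.

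Composing the confinement homotopy with the transverse scaling gives, for each level $l$, a deformation of $N_l\E(\tau)$ into $N_l\E^{\rm s}(\tau)$ that is stationary on $N_l\E^{\rm s}(\tau)$ up to length non--increasing scaling; this exhibits the inclusion $N_l\E^{\rm s}(\tau)\hookrightarrow N_l\E(\tau)$ as a levelwise homotopy equivalence of good simplicial spaces (goodness as in Lemma~\ref{lem:good}), whence $B\E^{\rm s}(\tau)\to B\E(\tau)$ is a homotopy equivalence by Lemma~\ref{lem:levelwise_equivalence}. The main obstacle is the confinement step itself: one must write the slabwise compression so that it depends continuously on the whole datum $(t_0\le\dots\le t_l;r)$, never creates self--intersections, keeps the image within the fixed disk $D^2(2\sqrt 2)$, does not raise the length of an already short rope above $3$, and keeps each $t_i$ a (strong) reduction point so that the whole deformation is through maps of posets. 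Granting this --- which is the family version of Mostovoy's observation that arbitrary knotting can be hidden in a rope of length $<3$ --- everything else is formal.
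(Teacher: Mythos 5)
Your reduction to a levelwise statement via goodness and Lemma~\ref{lem:levelwise_equivalence} is sound, and your diagnosis of why transverse scaling $\sigma_s$ alone fails (the length only decreases to the total variation of the $\R^1$--coordinate, which can exceed $3$) is exactly right. But the step you then lean on --- the ``slabwise longitudinal confinement'' --- is precisely the hard content of the lemma, and you have only described it, not constructed it. The difficulties you list yourself (continuity in the whole datum $(t_0\le\dotsb\le t_l;r)$, preservation of embeddedness and of the reduction points, control of the length) are genuine: the width of the collar around a merely reducible $t_i$ in which $r$ is a single transverse strand is not a continuous function of $r$, the location and extent of the ``wiggling'' inside a slab is not continuous in $r$ either, and a compression $h\times\id_{D^2}$ that squeezes a varying sub-interval of each slab must be chosen coherently over all of $N_l\E(\tau)$. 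Since you explicitly write ``granting this \dots everything else is formal,'' the proof has a hole at its center. (You also work directly with $\E^{\rm s}(\tau)\hookrightarrow\E(\tau)$ rather than first passing to the strongly reducible subposets via \cite[Lemma~3.4]{GalatiusRandal-Williams10}, which is what makes the collars honest product neighborhoods and would ease several of these continuity issues.)

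The paper avoids the confinement step entirely by a simpler device, and it is worth seeing why. After reducing to $\E^{\perp{\rm s}}(\tau)\hookrightarrow\E^{\perp}(\tau)$, the levelwise homotopy inverse first replaces the two tails $r|_{[0,t_0]}$ and $r|_{[t_l,1]}$ by straight segments (the long-extension $\overline{r|_{(t_0,t_l)}}$, unknotted canonically via Lemma~\ref{lem:W_L_contractible} and its mirror), and then applies the \emph{uniform} scaling $x\mapsto x/l(\overline{r|_{(t_0,t_l)}})$ of all three coordinates, capping off with a horizontal segment to reach $x=1$. A global similarity transformation multiplies length exactly by its ratio, so the scaled arc has length $1$ and the capped rope has length $<2<3$ with no need to confine the knotting longitudinally; continuity in $r$, embeddedness, and the fact that each $t_i/l(\overline{r|_{(t_0,t_l)}})$ remains a strong reduction point are all automatic. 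The price is that the $t_i$ move (they are rescaled), which your construction was trying to avoid by scaling only transversely --- but nothing in the levelwise-equivalence argument requires the $t_i$ to stay fixed, only that the assignment be a map of nerves. If you replace your two-stage confinement-plus-$\sigma_s$ by this unknot-then-rescale map, your outline closes up into the paper's proof.
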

\begin{proof}
Let $\E^{\perp}(\tau)$ be the subposet of $\E(\tau)$ consisting of $(t,r)$ with $r$ strongly reducible at $t$, and $\E^{\perp{\rm s}}(\tau):=\E^{\perp}(\tau)\cap\E^{\rm s}(\tau)$.
Then the inclusion $\E^{\perp{\rm s}}(\tau)\hookrightarrow\E^{\rm s}(\tau)$ induces a homotopy equivalence $B\E^{\perp{\rm s}}(\tau)\xrightarrow{\simeq}B\E^{\rm s}(\tau)$.
This follows in the same way as Galatius and Randal-Williams \cite[Theorem~3.9]{GalatiusRandal-Williams10}, using \cite[Lemma~3.4]{GalatiusRandal-Williams10};
modifying $r$ to be strongly reducible at each $t$ can be done keeping the length less than $3$.

We show that $\E^{\perp{\rm s}}(\tau)\hookrightarrow\E^{\perp}(\tau)$ induces a levelwise homotopy equivalence $N_*\E^{\perp{\rm s}}(\tau)\to N_*\E^{\perp}(\tau)$.
A homotopy inverse $N_l\E^{\perp}(\tau)\to N_l\E^{\perp{\rm s}}(\tau)$ is given as follows;
firstly unknot $r|_{(-\infty,t_0]}\sqcup r|_{[t_l,\infty)}$ similarly to the proof of Lemma~\ref{lem:W_L_contractible} to obtain $\overline{r|_{(t_0,t_l)}}$, then shrink $\overline{r|_{(t_0,t_l)}}$ to
\[
 \Theta(t,r):=
 \theta_{t,r}(\overline{r|_{(t_0,t_l)}})\cup
 \bigl([l(\overline{r|_{(t_0,t_l)}})^{-1},1]\times\{p_{23}(r|_{t_l})/l(\overline{r|_{(t_0,t_l)}})\}\bigr),
\]
where $\theta_{t,r}\co\R^3\to\R^3$ is given by $\theta_{t,r}(x):=x/l(\overline{r|_{(t_0,t_l)}})$.
It can be seen that $l(\Theta(t,r))<3$ since $l(\theta_{t,r}(\overline{r|_{(t_0,t_l)}}))=1$.
The map $N_l\E^{\perp}(\tau)\to N_l\E^{\perp{\rm s}}(\tau)$,
\[
 (t_0\le\dots\le t_l;r)\mapsto(t_0/l(\overline{r|_{(t_0,t_l)}}),\dots,t_l/l(\overline{r|_{(t_0,t_l)}});\Theta(t,r))
\]
gives a levelwise homotopy inverse.
\end{proof}

\section*{Acknowledgments}
The authors are truly grateful to Tadayuki Watanabe for invaluable comments and discussions, and to Katsuhiko Kuribayashi for his support in starting this work.
The first author deeply thanks Thomas Goodwillie, Robin Koytcheff, Rustam Sadykov, and Victor Turchin for interesting discussions and comments.
The authors appreciate the referees for careful reading of the previous version of this manuscript and for many fruitful comments.
The first author is partially supported by JSPS KAKENHI Grant Number 26800037.
The second author is partially supported by JSPS KAKENHI Grant Numbers JP25800038 and JP16K05144.


\end{document}